\newtheorem{theorem}{Theorem}
\newtheorem{corollary}{Corollary}
\newtheorem{definition}{Definition}
\newtheorem{problem}{Problem}
\newtheorem{assumption}{Assumption}
\newtheorem{remark}{Remark}
\title{Learning Lyapunov Functions for Hybrid Systems}
\author{Shaoru Chen, Mahyar Fazlyab, Manfred Morari, George J. Pappas, Victor M. Preciado
	\thanks{Shaoru Chen, Manfred Morari, George J. Pappas, and Victor M. Preciado are with the Department of Electrical and Systems Engineering, University of Pennsylvania. Email: \{srchen, morari, pappasg, preciado\}@seas.upenn.edu. Mahyar Fazlyab is with the Mathematical Institute for Data Science, Johns Hopkins University. Email: mahyarfazlyab@jhu.edu}} 
\date{}
\begin{document}
\pagestyle{plain}
\maketitle

\begin{abstract}
We propose a sampling-based approach to learn Lyapunov functions for a class of discrete-time autonomous hybrid systems that admit a mixed-integer representation. Such systems include autonomous piecewise affine systems, closed-loop dynamics of linear systems with model predictive controllers, piecewise affine/linear complementarity/mixed-logical dynamical system in feedback with a ReLU neural network controller, etc. The proposed method comprises an alternation between a learner and a verifier to find a valid Lyapunov function inside a convex set of Lyapunov function candidates. In each iteration, the learner uses a collection of state samples to select a Lyapunov function candidate through a convex program in the parameter space. The verifier then solves a mixed-integer quadratic program in the state space to either validate the proposed Lyapunov function candidate or reject it with a counterexample, i.e., a state where the Lyapunov condition fails. This counterexample is then added to the sample set of the learner to refine the set of Lyapunov function candidates. By designing the learner and the verifier according to the analytic center cutting-plane method from convex optimization, we show that when the set of Lyapunov functions is full-dimensional in the parameter space, our method finds a Lyapunov function in a finite number of steps. We demonstrate our stability analysis method on closed-loop MPC dynamical systems and a ReLU neural network controlled PWA system.


%
%
%

\end{abstract}

\section{Introduction}

Hybrid systems have become widespread within the systems and control community in the last decades thanks to their flexibility in modeling the interaction of continuous and discrete dynamical systems that frequently arise in cyber-physical systems (CPS) \cite{bemporad1999control,borrelli2017predictive}. As today's cyber-physical systems are getting more complex, developing new methods for design, analysis, and control of hybrid systems is increasingly important.

Analysis and control design for general hybrid systems is challenging and therefore, various methods have been proposed to tackle special classes of hybrid systems such as linear complementarity (LC) systems~\cite{heemels2000linear, van1998complementarity}, mixed logical dynamical (MLD) systems~\cite{bemporad1999control}, and piecewise affine (PWA) systems~\cite{sontag1981nonlinear}. While these classes are mathematically equivalent~\cite{heemels2001equivalence}, their representation could have a high impact on their numerical tractability. When it comes to stability analysis, various methods have been proposed for PWA systems~\cite{biswas2005survey, lin2009stability, sun2010stability} while methods that directly deal with MLD and LC systems are relatively scarce. Nevertheless, stability analysis tools for PWA systems are applicable to MLD and LC systems as they can be transformed into PWA systems~\cite{heemels2001equivalence}.

Transforming various types of hybrid systems into a PWA representation for stability analysis may not always be efficient. For example, for a PWA system in feedback with a ReLU neural network controller, although the closed-loop dynamics is PWA, identifying the PWA representation may be tedious and the stability analysis task may become very challenging since the number of partitions generated by the ReLU network can be very large~\cite{pascanu2013number}. 


In this paper, we propose a learning-based approach to stability analysis of hybrid systems that admit a mixed-integer formulation. These systems include PWA, MLD, LC systems and ReLU networks. Our method comprises a learner and a verifier, which iteratively search for a Lyapunov function from a target class $\mathcal{F}$ of Lyapunov functions (e.g., quadratic or piecewise quadratic). In each iteration, the learner uses a set of samples of the hybrid system to localize $\mathcal{F}$ by a convex set $\tilde{\mathcal{F}} \supseteq \mathcal{F}$ and then solves a semidefinite program (SDP) to select a Lyapunov function candidate from $\tilde{\mathcal{F}}$. The verifier then solves a mixed-integer program in the state space to either validate the Lyapunov function candidate or reject it with a counterexample, i.e., a state where the Lyapunov condition fails. This counterexample is then added to the sample set of the learner to refine the set $\tilde{\mathcal{F}}$ of Lyapunov function candidates. By designing the alternation between the learner and the verifier according to the analytic center cutting-plane method (ACCPM), we show that when the set $\mathcal{F}$ of Lyapunov functions is full-dimensional and contains a norm ball with radius $\epsilon > 0$ in the parameter space, our method is guaranteed to find a Lyapunov function in $\mathcal{O}({n}^3/\epsilon^2)$ steps, where $n$ is the ambient dimension.

\subsection{Related work}
\emph{LMI-based stability analysis of PWA systems}:
Among various stability analysis methods for PWA dynamical systems~\cite{lin2009stability, sun2010stability}, linear matrix inequality (LMI)-based approaches are relatively prominent. These methods construct an SDP whose solution gives a valid Lyapunov function. For continuous-time PWA systems, LMI-based approaches to synthesize piecewise affine \cite{johansson2003piecewise}, piecewise quadratic (PWQ) \cite{johansson1997computation} and piecewise polynomial Lyapunov functions~\cite{prajna2003analysis} have been proposed. The adaptation of these Lyapunov function synthesis methods to handle discrete-time PWA systems is summarized in~\cite{biswas2005survey}. For discrete-time PWA systems, a common feature of the LMI-based methods is computing the transition map between all pairs of modes. This step may become time-consuming when the number of modes is large. 

\medskip

\noindent \emph{Sampling-based Synthesis Methods}: The iterative approach of alternating between a learning module and a verification module to synthesize a certificate for control systems is known as the Counter-Example Guided Inductive Synthesis (CEGIS) framework proposed by~\cite{solar2006combinatorial, solar2008program} in the verification community. The application of CEGIS to Lyapunov function synthesis for continuous-time nonlinear autonomous systems can be found in~\cite{ahmed2020automated, abate2020formal,kapinski2014simulation} using Satisfiability Modulo Theory (SMT) solvers for verification. In general, the termination of the iterative procedures in these works is not guaranteed. Notably, Ravanbakhsh et al.~\cite{ravanbakhsh2019learning} apply the CEGIS framework to synthesize control Lyapunov functions for nonlinear continuous-time systems and provide finite-step termination guarantees for the iterative algorithm through careful design of the learner which essentially implements the maximum volume ellipsoid cutting-plane method~\cite{Tarasov1988}. Our work differs from~\cite{ravanbakhsh2019learning} in the algorithm design and the application on hybrid systems. Other than the CEGIS framework, a learning-based approach to synthesize control barrier functions for hybrid systems is proposed in~\cite{lindemann2020learning}.

\subsection{Notations}
We denote the set of real numbers by $\mathbb{R}$, the set of integers by $\mathbb{Z}$, the $n$-dimensional real vector space by $\mathbb{R}^n$, and the set of $n \times m$-dimensional real matrices by $\mathbb{R}^{n \times m}$. The standard inner product between two matrices $A, B \in \mathbb{R}^{n \times m}$ is given by $\langle A, B \rangle = \text{tr}(A^\top B)$ and the Frobenius norm of a matrix $A \in \mathbb{R}^{n \times m}$ is given by $\lVert A \rVert_F = (\text{tr}(A^\top A))^{1/2}$. Denote $\mathbb{S}^n$ the set of $n \times n$-dimensional symmetric matrices, and $\mathbb{S}^n_{+}$ ($\mathbb{S}^n_{++}$) the set of $n \times n$-dimensional positive semidefinite (definite) matrices. Given a set $\mathcal{S} \subseteq \mathbb{R}^{n_x + n_y}, \text{Proj}_x (\mathcal{S}) = \{ x \in \mathbb{R}^{n_x} \vert \exists y \in \mathbb{R}^{n_y} \text{ s.t. } (x, y) \in \mathcal{S}\}$ denotes the orthogonal projection of $\mathcal{S}$ onto the subspace $\mathbb{R}^{n_x}$. We denote $\text{int}(\mathcal{S})$ the set of all interior points in $\mathcal{S}$.



\section{Mixed-integer formulation of hybrid systems}
Consider a discrete-time autonomous hybrid system
\begin{equation}\label{eq:nonlinear_sys}
x_+ = f(x)
\end{equation}
where $x \in \mathbb{R}^{n_x}$ is the state and $f:\mathbb{R}^{n_x} \mapsto \mathbb{R}^{n_x}$ is a continuous function. Without loss of generality, assume system~\eqref{eq:nonlinear_sys} has an equilibrium at the origin, i.e., $0 = f(0)$.
Let $\mathcal{R} \subset \mathbb{R}^{n_x}$ be the domain of the system and $\mathcal{R}$ is a compact set which contains the origin in its interior. Denote $x_k$ the state of system~\eqref{eq:nonlinear_sys} at time $k$ and $x_0$ the initial state. The nonlinear dynamics $x_+ = f(x)$ with domain $\mathcal{R}$ can be equivalently described through its graph defined as
\begin{equation}\label{eq:graph}
\text{gr}(f) = \{(x, y) \in \mathbb{R}^{n_x} \times \mathbb{R}^{n_x} \vert x \in \mathcal{R}, y = f(x) \}.
\end{equation}

In this paper, we study the Lyapunov stability of the origin of~\eqref{eq:nonlinear_sys} for a class of hybrid systems that admit a mixed-integer formulation.

\begin{definition}[Mixed-integer formulation of a set~\cite{marcucci2019mixed}] \label{def:MIF}
	For a set $\mathcal{Q} \subset \mathbb{R}^{n_z}$, consider the set $\mathcal{L}_\mathcal{Q} \subseteq \mathbb{R}^{n_z} \times \mathbb{R}^{ n_\lambda} \times \mathbb{Z}^{n_\mu}$ in a lifted space given by 
	\begin{equation}
	\mathcal{L}_\mathcal{Q} = \{ (z \in \mathbb{R}^{n_z}, \lambda \in \mathbb{R}^{n_\lambda}, \mu \in \mathbb{Z}^{n_\mu})  \vert \ell(z, \lambda, \mu) \leq v\},
	\end{equation}
	with a function $\ell: \mathbb{R}^{n_z} \times \mathbb{R}^{ n_\lambda} \times \mathbb{Z}^{n_\mu} \rightarrow \mathbb{R}^{n_\ell}$ and a vector $v \in \mathbb{R}^{n_\ell}$. The set $\mathcal{L}_\mathcal{Q}$ is a mixed-integer formulation of $\mathcal{Q}$ if $\text{Proj}_z(\mathcal{L}_\mathcal{Q}) = \mathcal{Q}$. If the function $\ell$ is linear, we call the related formulation mixed-integer linear (MIL). 
\end{definition} 
In the next subsections, we show how to find mixed-integer formulations for PWA, MLD, LC systems and ReLU networks.

\subsection{Piecewise affine systems}
\label{sec:PWA_representation}
Consider a discrete-time piecewise affine system with control inputs
\begin{equation} \label{eq:pwa_dynamics}
x_+ = \psi_i(x, u) = A_i x + B_i u + c_i, \forall (x, u) \in \mathcal{R}_i,
\end{equation}
where $\mathcal{R}_i = \{(x, u) \in \mathbb{R}^{n_x} \times \mathbb{R}^{n_u} \vert F_i x + G_i u \leq h_i\}$ for $i \in \mathcal{I} = \{1, 2, \allowdisplaybreaks \cdots, N_{mode}\}$ are polyhedral partitions of the state-input space $\mathcal{R} = \bigcup_{i \in \mathcal{I}} \mathcal{R}_i$. We assume that the partitions $\mathcal{R}_i$ are bounded for all $i \in \mathcal{I}$. To make the PWA system~\eqref{eq:pwa_dynamics} well-posed, we assume that $\text{int}(\mathcal{R}_i) \cap \text{int}(\mathcal{R}_j) = \emptyset, \forall i \neq j$ and $f_i(x,u) = f_j(x,u), \forall (x,u) \in \mathcal{R}_i \cap \mathcal{R}_j$ if the intersection is not an empty set. 

We denote the PWA dynamics~\eqref{eq:pwa_dynamics} collectively as $x_+ = \psi(x, u)$. The graph of $\psi(x,u)$ is given by
$\text{gr}(\psi) = \bigcup_{i\in \mathcal{I}} \text{gr} (\psi_i)$, 
 where each graph $\text{gr}(\psi_i)$ is defined as
\begin{equation}
\text{gr}(\psi_i) = \{(x, u, x_+) \vert Q_i [x^\top \ u^\top \ x_+^\top]^\top \leq q_i \},
\end{equation}
with
\begin{equation} 
Q_i = \begin{bmatrix}
A_i & B_i & -I \\ 
-A_i & -B_i & I \\
F_i & G_i & 0 
\end{bmatrix}, \quad 
q_i = \begin{bmatrix}
-c_i \\ c_i \\h_i 
\end{bmatrix}.
\end{equation}

In this paper, we apply a disjunctive programming-based formulation~\cite{marcucci2019mixed} which states that $x_+ = \psi(x,u)$ is equivalent to the following set of constraints~\cite{marcucci2019mixed}
\begin{equation} \label{eq:pwa_MIL_constr}
\begin{aligned}
& F_i x_i + G_i u_i \leq \mu_i h_i, \ \mu_i \in \{0, 1\}, \ \forall i \in \mathcal{I}, \\
& 1 = \sum_{i \in \mathcal{I}} \mu_i, \ x = \sum_{i \in \mathcal{I}} x_i, \ u =\sum_{i\in \mathcal{I}} u_i \\
& x_+ = \sum_{i\in \mathcal{I}} (A_i x_i + B_i u_i + \mu_i c_i).
\end{aligned}
\end{equation}
In the disjunctive programming formulation~\eqref{eq:pwa_MIL_constr}, we have $N_{mode}$ binary variables $\mu_i$ and $2N_{mode}$ auxiliary continuous variables $\{x_i, u_i\}$. The binary variable $\mu_i$ can be interpreted as the indicator of the mode where the state-input pair $(x, u)$ lives in. Note that $\mu_i = 1$ imposes $\mu_j = 0, \forall j \neq i$. By the boundedness of the partitions $\mathcal{R}_j$, we have $F_j x_j + G_j u_j \leq \mu_j h_j = 0 \Rightarrow x_j = 0, u_j = 0$, and correspondingly $x = \sum_{k \in \mathcal{I}} x_k = x_i, u = \sum_{k \in \mathcal{I}} u_k = u_i, x_+ = \sum_{k \in \mathcal{I}} (A_k x_k + B_k u_k +  \mu_k c_k) = A_i x_i + B_i u_i + c_i$. We can also obtain a mixed-integer formulation of the PWA dynamics~\eqref{eq:pwa_MIL_constr} through the big-M method~\cite{vielma2015mixed}.

When the PWA system~\eqref{eq:pwa_dynamics} is interconnected with a controller which also has a mixed-integer formulation, we can describe the closed-loop dynamics through mixed-integer constraints. When an autonomous PWA system is considered, we obtain its mixed-integer formulation by removing the control input related variables in~\eqref{eq:pwa_MIL_constr}.

\subsection{Linear complementarity systems}
\label{sec:LCS_representation}
Consider a discrete-time linear complementarity system~\cite{heemels2001equivalence, heemels2000linear} 
\begin{subequations} \label{eq:LCS}
	\begin{align}
	\begin{split}
	&x_+ = Ax + B_1 u + B_2 w
	\end{split}\\
	\begin{split}
	&v = E_1 x + E_2 u + E_3 w + g_4
	\end{split}\\
	\begin{split} \label{eq:LC_constr}
	&0 \leq v \perp w \geq 0
	\end{split}
	\end{align}
\end{subequations}
where $v, w \in \mathbb{R}^s$ and $\perp$ denotes that $v^\top w = 0$. The complementarity constraint~\eqref{eq:LC_constr} can be equivalently formulated as a set of mixed-integer linear constraints through the big-M method~\cite{vielma2015mixed} as
\begin{equation} \label{eq:MIL_LCS}
\begin{aligned}
v_i w_i = 0 \Leftrightarrow 0 \leq v_i \leq \mu_i M_{1,i}, \ 0 \leq w_i \leq (1 - \mu_i) M_{2,i},
\end{aligned}
\end{equation}
where $\mu_i \in \{0, 1\}$ and the subscript $i$ denotes the $i$-th entry of the variable $v$ and $w$. The binary variable $\mu_i$ either forces $v_i$ to be zero ($\mu_i = 0$) or forces $w_i$ to be zero ($\mu_i = 1$). In general, selecting the big-M values $M_{1, i}$ and $M_{2, i}$ is no simple task~\cite{Kleinert2020lunch}. In practice, when the big-M values are hard to obtain, we can alternatively use the special-ordered set constraint in Gurobi~\cite{gurobi} which forces constraint~\eqref{eq:LC_constr} through a branching rule instead of specifying the big-M's explicitly.

One interesting application of the formulations in~\eqref{eq:LCS} and~\eqref{eq:MIL_LCS} is in the description of the closed-loop MPC systems. We refer the readers to~\cite{simon2016stability} for the details of this formulation.

\subsection{Mixed-logical dynamical systems}
The mixed-logical dynamical system~\cite{bemporad1999control} can be written as
\begin{align}
&x_+ = A x + B_1 u + B_2 \delta + B_3 z \\
&E_1 x + E_2 u + E_3 \delta + E_4 z \leq g_5
\end{align}
where $x = [x_r^\top \ x_b^\top]^\top$ with $x_r \in \mathbb{R}^{n_r}$ and $x_b \in \{0,1\}^{n_b}$ ($u$ has a similar structure), $z \in \mathbb{R}^{n_z}$ and $\delta \in \{0, 1\}^{n_\delta}$ are auxiliary variables. The MLD system is explicitly constructed through mixed-integer linear constraints.

\subsection{ReLU neural networks}
Consider a PWA system in feedback with an $L$-layer ReLU neural network controller $u = \pi(x)$, where $\pi: \mathbb{R}^{n_x} \rightarrow \mathbb{R}^{n_u}$ is given by 
\begin{equation} \label{eq:nn_controller}
\begin{aligned}
z_0 &= x\\
z_{\ell+1} &= \max(W_\ell z_\ell + b_\ell, 0), \quad \ell = 0, \cdots, L -1 \\
\pi(x) &= W_{L} z_{L} + b_L.
\end{aligned}
\end{equation}
Here $z_0 = x \in \mathbb{R}^{n_0} \ (n_0 = n_x)$ is the input to the neural network, $z_{\ell+1} \in \mathbb{R}^{n_{\ell+1}}$ is the vector representing the output of the $(\ell+1)$-th hidden layer with $n_{\ell+1}$ neurons, $\pi(x) \in \mathbb{R}^{n_{L+1}} \ (n_{L+1} = n_u)$ is the output of the neural network, and $W_\ell \in \mathbb{R}^{n_{\ell + 1} \times n_\ell}, b_\ell \in \mathbb{R}^{n_{\ell + 1}}$ are the weight matrix and the bias vector of the $(\ell+1)$-th hidden layer, respectively.

Consider a scalar ReLU function $y=\max(0,x)$ where $\underline{x} \leq x \leq \bar{x}$. Then it can be shown that the ReLU function admits the following mixed-integer representation~\cite{tjeng2017evaluating},
\begin{equation}
\begin{aligned}
&y=\max(0,x), \ \underline{x} \leq x \leq \bar{x} \iff \\
&y \geq 0, \ y \geq x, y \leq x - \underline{x} (1-t), \ y \leq \bar{x} t, \ t \in \{0,1\},
\end{aligned}
\end{equation}
where the binary variable $t \in \{0,1\}$ is an indicator of the activation function being active ($y=x$) or inactive ($y=0$). For a ReLU network described by the equations in \eqref{eq:nn_controller}, let $\underline{m}^{\ell}$ and $\bar{m}^{\ell}$ be the element-wise lower and upper bounds on the input to the $(\ell+1)$-th activation layer, i.e., $\underline{m}_{\ell} \leq W_{\ell} z_{\ell} +b_{\ell}\leq \bar{m}_{\ell}$. Then the neural network equations are equivalent to a set of mixed-integer constraints:
\begin{equation}
\begin{aligned} \label{eq:MIL_NN}
& z_{\ell+1} = \max(W_\ell z_\ell + b_\ell,0) \iff \\ 
& \begin{cases}
z_{\ell+1} \geq W_\ell z_\ell + b_\ell \\ 
z_{\ell+1} \leq W_\ell z_\ell + b_\ell - \mathrm{diag}(\underline{m}_\ell) (\mathbf{1}-t_\ell) \\
z_{\ell+1} \geq 0 \\
z_{\ell+1} \leq \mathrm{diag}(\bar{m}_\ell)  t_\ell,
\end{cases} 
\end{aligned}
\end{equation}
where $t_{\ell} \in \{0,1\}^{n_{\ell+1}}$ is a vector of binary variables for the $(\ell+1)$-th activation layer. We note that the element-wise pre-activation bounds $\{\underline{m}_{\ell} , \bar{m}_{\ell}\}$ can be found by, for example, interval bound propagation or linear programming assuming known bounds on the input of the neural network \cite{weng2018towards,hein2017formal,wong2018provable}. Combined with the MIL formulation of PWA systems shown in Section~\ref{sec:PWA_representation}, the closed-loop dynamics of a ReLU neural network controlled PWA system admits a mixed-integer formulation.

\section{Stability Analysis via Lyapunov functions}
%
The convergence behavior of system~\eqref{eq:nonlinear_sys} around its equilibrium points can be studied by Lyapunov stability.

\begin{definition} \normalfont{(Lyapunov stability \cite[Chapter~13]{haddad2011nonlinear})} \label{def:Lyap_stability}
	The equilibrium point $x = 0$ of the autonomous system~\eqref{eq:nonlinear_sys} is
	\begin{itemize}
		\item \textbf{Lyapunov stable} if for each $\epsilon >0$, there exists $\delta = \delta(\epsilon)$ such that if 
		$\lVert x_0 \rVert < \delta$, then $\lVert x_k \rVert < \epsilon, \forall k \geq 0$.
		\item \textbf{asymptotically stable}  if it is Lyapunov stable and there exists $\delta > 0$ such that if
		$\lVert x_0 \rVert < \delta$, then $\lim_{k\rightarrow \infty} \|x_k\| = 0$.
	\end{itemize}
\end{definition}

Since the hybrid dynamics~\eqref{eq:nonlinear_sys} is nonlinear, Lyapunov stability is often a local property and it is of interest to estimate its region of attraction defined as
\begin{definition}[Region of attraction]
	The region of attraction $\mathcal{O}$ of the nonlinear system~\eqref{eq:nonlinear_sys} is the set of states from which the trajectory of system~\eqref{eq:nonlinear_sys} converges to the origin, i.e., $\mathcal{O} = \{ x_0 \in \mathcal{R} \vert \lim_{t \rightarrow \infty} x_t = 0\}$.
\end{definition}

In this work, we do not assume the domain $\mathcal{R}$ to be positive invariant. Instead, we introduce the region of interest (ROI) $\mathcal{X} \subseteq \mathcal{R}$ which is a polytopic set given by
\begin{equation}
\mathcal{X} := \{ x \in \mathbb{R}^{n_x} \vert F_\mathcal{X} x \leq h_\mathcal{X}\},
\end{equation}
to guide our search for an inner approximation $\tilde{\mathcal{O}}$ of the ROA $\mathcal{O}$. We can certify the asymptotic stability of system~\eqref{eq:nonlinear_sys} and find an $\tilde{\mathcal{O}}$ by constructing Lyapunov functions defined in the following theorem:

\begin{theorem} \label{thm:Lyapunov}
\normalfont{\cite[Chapter~13]{haddad2011nonlinear}}
	Consider the discrete-time nonlinear hybrid system~\eqref{eq:nonlinear_sys}. If there is a continuous function $V(x): \mathcal{R} \mapsto \mathbb{R}$ with domain $\mathcal{R}$ such that
	\begin{subequations}
		\label{eq:Lyap_conditions}
		\begin{align}
		\begin{split} \label{eq:Lyap_pos_constr}
		& V(0) = 0 \text{ and } V(x) > 0, \forall x \in \mathcal{X} \setminus \{0\}
		\end{split} \\
		\begin{split} \label{eq:general_Lyap_cond_3}
		& V(f(x)) - V(x) \leq 0, \forall x \in \mathcal{X},
		\end{split}
		\end{align}
	\end{subequations}
	where the set $\mathcal{X}$ is the region of interest (ROI) satisfying $\mathcal{X} \subseteq \mathcal{R}$ and $0 \in \text{int}(\mathcal{X})$, then the origin is Lyapunov stable. If, in addition, 
	\begin{equation}\label{eq:Lyap_diff_constr}
	V(f(x)) - V(x) < 0, \forall x \in \mathcal{X} \setminus \{ 0 \},
	\end{equation}
	then the origin is asymptotically stable. 
\end{theorem}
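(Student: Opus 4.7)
The plan is to prove both claims via the classical sublevel-set strategy for Lyapunov functions, adapted to discrete time where the one-step update $x_{k+1} = f(x_k)$ can a priori jump across level sets. For the first claim (Lyapunov stability), given $\epsilon > 0$, I would first shrink $\epsilon$ if necessary so that the closed ball $\bar{B}_\epsilon := \{x : \|x\| \leq \epsilon\}$ lies in $\mathcal{X}$, which is possible because $0 \in \text{int}(\mathcal{X})$. Define $\alpha := \min_{\|x\| = \epsilon} V(x)$; this minimum is attained by continuity of $V$ and compactness of the sphere, and is strictly positive by condition~\eqref{eq:Lyap_pos_constr}. Using continuity of $f$ at $0$ together with $f(0) = 0$, pick an intermediate radius $r \in (0, \epsilon)$ such that $\|x\| \leq r$ implies $\|f(x)\| \leq \epsilon$. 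Set $\alpha_r := \min_{r \leq \|x\| \leq \epsilon} V(x) > 0$, and choose $\delta \in (0, r)$ via continuity of $V$ at $0$ so that $V(x) < \alpha_r$ whenever $\|x\| < \delta$.

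The key inductive step is then: if $\|x_0\| < \delta$ and $\|x_j\| < r$ for $j = 0, \dots, k$, the one-step containment yields $\|x_{k+1}\| \leq \epsilon$, the Lyapunov inequality~\eqref{eq:general_Lyap_cond_3} applied along the trajectory in $\mathcal{X}$ gives $V(x_{k+1}) \leq V(x_k) \leq \dots \leq V(x_0) < \alpha_r$, and combining these with the definition of $\alpha_r$ forces $\|x_{k+1}\| < r$, closing the induction. I expect this step to be the main obstacle: in continuous time one simply uses continuity of the trajectory to rule out crossing the sphere $\|x\| = \epsilon$ while $V$ stays below $\alpha$, but in discrete time an extra one-step bound on $f$ is needed to prevent $x_{k+1}$ from jumping past $\bar{B}_\epsilon$ entirely; the continuity of $f$ stated in the model setup makes this circumvention available.

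For asymptotic stability under the strict decrease condition~\eqref{eq:Lyap_diff_constr}, I would start from a trajectory confined to $\bar{B}_r$ as guaranteed above. The sequence $\{V(x_k)\}$ is non-increasing and bounded below by $0$, hence converges to some $V^\star \geq 0$. By Bolzano--Weierstrass, the bounded sequence $\{x_k\}$ has a convergent subsequence $x_{k_j} \to x^\star$ with $V(x^\star) = V^\star$ by continuity. If $x^\star \neq 0$, the strict inequality yields $V(f(x^\star)) < V^\star$, and continuity of $V \circ f$ gives $V(x_{k_j+1}) \to V(f(x^\star)) < V^\star$, contradicting $V(x_k) \to V^\star$. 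Hence $x^\star = 0$ and $V^\star = 0$. To upgrade $V(x_k) \to 0$ to $\|x_k\| \to 0$, observe that for any $\eta \in (0, r]$ the minimum $\gamma_\eta := \min_{\eta \leq \|x\| \leq r} V(x)$ is strictly positive, so $V(x_k) < \gamma_\eta$ eventually forces $\|x_k\| < \eta$, completing the proof.
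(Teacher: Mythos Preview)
Your argument is correct. The two-radius trick (first confining the one-step image via continuity of $f$, then using the annulus minimum $\alpha_r$ to close the induction) is exactly what is needed to handle the discrete-time ``jump'' issue you flagged, and the subsequence/limit argument for asymptotic stability is the standard one; the only detail worth making explicit is that the limit point $x^\star$ lies in $\bar{B}_r \subset \mathcal{X}$, so the strict decrease condition~\eqref{eq:Lyap_diff_constr} indeed applies at $x^\star$.

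There is, however, nothing to compare against in the paper: Theorem~\ref{thm:Lyapunov} is stated with a citation to \cite[Chapter~13]{haddad2011nonlinear} and no proof is given---it is invoked as a known result from the literature rather than proved in the paper itself. Your write-up is a faithful reconstruction of the textbook argument, so it is entirely consistent with the cited source.
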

We call any $V(\cdot)$ satisfying~\eqref{eq:Lyap_pos_constr} a Lyapunov function \emph{candidate}. If additionally, $V(\cdot)$ satisfies the condition~\eqref{eq:general_Lyap_cond_3} or~\eqref{eq:Lyap_diff_constr}, then $V(\cdot)$ is called a \emph{valid} Lyapunov function candidate, or simply, a Lyapunov function. Since asymptotic stability is our primary focus in this paper, Lyapunov functions refer to any $V(\cdot)$ satisfying constraints~\eqref{eq:Lyap_pos_constr} and~\eqref{eq:Lyap_diff_constr} unless specified otherwise. Once a Lyapunov function $V(x)$ is obtained, an inner estimate of the ROA is given by 
$\mathcal{\tilde{O}} = \{ x \vert V(x) \leq \tau\}$, 
where $\tau = \inf_{x \in \mathcal{R} \setminus \mathcal{X}} V(x)$. In other words, $\mathcal{\tilde{O}} \subset \mathcal{X}$ is the largest sublevel set of $V(x)$ that is contained in $\mathcal{X}$. 

\begin{remark}
The ROI $\mathcal{X}$ can be set according to prior knowledge or from simulation of the nonlinear hybrid dynamics~\eqref{eq:nonlinear_sys}. See Section~\ref{sec:simulation} for examples of choosing the ROI.
\end{remark}

\subsection{Lyapunov function parameterization}
Searching for a Lyapunov function in the function space is intractable since the problem is infinite-dimensional in this space. Instead, we reduce our search space to the class of Lyapunov functions defined by 
\begin{equation} \label{eq: multi-step Lyapunov}
V^{(k)}(x;P) = {z^{(k)}}^\top P z^{(k)},
\end{equation}
where $P \in \mathbb{S}^{(k+1)n_x}_{++}$, and $z^{(k)}$ is given by
\begin{equation} \label{eq:basis}
z^{(k)} = [x^\top  \quad f(x)^{(1),\top} \quad f^{(2), \top}(x) \quad \cdots \quad f^{(k), \top}(x)]^\top
\end{equation}
Here we use the notation $f^{(0)}(x) = x, f^{(k+1)} = f(f^{(k)}(x)), \forall k \geq 0$. We call $V^{(k)}(x;P)$ the Lyapunov function candidate of order $k$, which is a quadratic function in composition with the system dynamics $f(x)$ evolved for $k$ steps. Indeed, we can increase the complexity of the function class monotonically by increasing the order $k$.

Since $P$ is positive definite, searching for a valid Lyapunov function of the form \eqref{eq: multi-step Lyapunov} reduces to find a matrix $P$ that satisfies the Lyapunov difference condition ~\eqref{eq:Lyap_diff_constr}. Explicitly, we can characterize the space of matrices $P$ that admit a valid Lyapunov function candidate as
\begin{equation} \label{eq:target_set}
\mathcal{F} \!= \!\{P \in \mathbb{S}^{(k+1)n_x} \vert \alpha I \! \preceq P \! \preceq \!\beta I, \Delta V^{(k)}(x, P) < 0, \forall x \in \mathcal{X} \setminus \{0\} \}.
\end{equation} 
where $0 \leq \alpha < \beta$, and $\Delta V^{(k)}(x, P)$ is the Lyapunov difference given by
\begin{equation}
\Delta V^{(k)}(x, P) := V^{(k)}(f(x);P) - V^{(k)}(x;P).
\end{equation}
The constraint $\alpha I \preceq P$ guarantees  the condition~\eqref{eq:Lyap_pos_constr}, while the constraint $P \preceq \beta I$ is imposed to make $\mathcal{F}$ bounded without loss of generality since we can always scale $P$ while satisfying~\eqref{eq:Lyap_diff_constr}. 

We call $\mathcal{F}$ the target set. It follows that $\mathcal{F}$ is convex since it is defined by semidefinite as well as linear constraints on $P$. As $\alpha I \preceq P \preceq \beta I$ imposes an additional constraint on the condition number of $P$, in practice we choose $\beta/\alpha$ large or simply set $\alpha = 0$~\footnote{As will be shown next, the proposed method always finds a feasible solution in the interior of $\mathcal{F}$. Therefore, choosing $\alpha = 0$ does not affect the positive definiteness of the solution.}. Then finding a Lyapunov function in a given function class $V^{(k)}(x;P)$ is stated as the following problem:

\begin{problem}\label{prob:target_set}
	For each parameterized function class $V^{(k)}(x;P)$ and the target set $\mathcal{F}$ defined in~\eqref{eq:target_set}, find a feasible point in $\mathcal{F}$ or certify that $\mathcal{F}$ is empty.
\end{problem}

Although the target set $\mathcal{F}$ is convex, the fact that it is characterized by infinitely many linear constraints (i.e., the constraints $\Delta V^{(k)}(x, P)$ $<0, \forall x \in \mathcal{X} \setminus \{0\}$) poses computational challenges to solving Problem~\ref{prob:target_set}. In this work, we propose a learning-based approach to address this challenge by iteratively drawing state samples to refine our over-approximation of the target set $\mathcal{F}$. By designing the learning strategy based on ACCPM from convex optimization, we show that when the target set $\mathcal{F}$ is full-dimensional in the parameter space of $P$, our method is guaranteed to find a feasible point in $\mathcal{F}$ in a finite number of steps.

\begin{remark}
	The parameterization of $V^{(k)}(x;P)$ is inspired by the finite-step Lyapunov function~\cite{aeyels1998new, bobiti2016sampling} and the non-monotonic Lyapunov functions~\cite{ahmadi2008non} which also construct Lyapunov function candidates using the system states several steps ahead. $V^{(k)}(x;P)$ allows us to parameterize complex function classes with a relatively small number of parameters. For example, when $f(x)$ is a PWA function with $N_{mode}$ (possibly large) partitions, $V^{(1)}(x;P)$ is a PWQ function with the same partitions in the state space.	
\end{remark}

\section{Learning Lyapunov functions from counterexamples}
\label{sec:ACCPM}

We recall from the previous section that finding a feasible point of the convex set $\mathcal{F}$ is computationally intractable since the condition $\Delta V^{(k)}(x;P) < 0$ must hold for all $x \in \mathcal{X} \setminus \{0\}$. To overcome this intractability, we adopt a learning-based approach, in which we first select a set of finite samples $\mathcal{S} = \{x^1, x^2, \cdots, x^N\} \subset \mathcal{X}$ and then enforce the linear constraint $\Delta V^{(k)}(x;P) < 0$ to hold only for $x \in \mathcal{S}$. This results in an over-approximation of $\mathcal{F}$ given by

\begin{equation} \label{eq:localization}
	\tilde{\mathcal{F}} = \{ P \in \mathbb{S}^{(k+1)n_x} \vert \alpha I \preceq P \preceq \beta I, \Delta V^{(k)}(x, P) \leq 0, \forall x \in \mathcal{S}\}.
\end{equation} 
We call $\tilde{\mathcal{F}}$ the localization set. Finding a feasible point in $\tilde{\mathcal{F}}$ now becomes a tractable convex feasibility problem. However, there is no guarantee that a $P \in \tilde{\mathcal{F}}$ would correspond to a valid Lyapunov function, even if the number of samples approaches infinity. As one of our contributions, we propose an efficient learning strategy based on the analytic center cutting-plane method (ACCPM) to iteratively grow the sample set and refine the set $\tilde{\mathcal{F}}$ until we find a feasible point in $\mathcal{F}$. In the next subsections, we describe the proposed approach and provide finite-step termination guarantees when $\mathcal{F}$ is non-empty and satisfies the following assumption:

\begin{assumption} \label{assump:nonempty}
The target set $\mathcal{F}$ defined in~\eqref{eq:target_set} is full-dimensional and there exists $P_{center}\in \mathbb{S}^{(k+1)n_x}$ such that $\{P \in \mathbb{S}^{(k+1)n_x} \vert \lVert P - P_{center} \rVert_F \leq \epsilon \} \subset \mathcal{F}$ where $\lVert \cdot \rVert_F$ is the Frobenius norm. 
\end{assumption} 
 
\subsection{Analytic center cutting-plane method}
Cutting-plane methods~\cite{atkinson1995cutting, elzinga1975central,boyd2007localization} are iterative algorithms to find a point in a target convex set $\mathcal{F}$ or to determine whether $\mathcal{F}$ is empty. In these methods, we have no information on $\mathcal{F}$ except for a ``cutting-plane oracle'', which can verify whether $P \in \mathcal{F}$ for a given $P$. Let $\tilde{\mathcal{F}}$ be a localization set defined by a finite set of inequalities that over approximates the target set, i.e., $\mathcal{F} \subseteq \tilde{\mathcal{F}}$. If $\tilde{\mathcal{F}}$ is empty, then we have proof that the target set $\mathcal{F}$ is also empty. Otherwise, we query the oracle at a point $P \in \tilde{\mathcal{F}}$. If $P \in \mathcal{F}$, the oracle returns `yes' and the algorithm terminates; if $P \notin \mathcal{F}$, it returns `no' together with a separating hyperplane that separates $P$ and $\mathcal{F}$. In the latter case, the cutting-plane method updates the localization set by $\tilde{\mathcal{F}} \leftarrow  \tilde{\mathcal{F}} \cap \{\text{half space defined by the separating hyperplane}\}$ as shown in Fig.~\ref{fig:cutting_plane_demo}. This process continues until either a point in the target set is found or the target set is certified to be empty.

Based on how the query point is chosen, different cutting-plane methods have been proposed including the center of gravity method \cite{Lev65}, the maximum volume ellipsoid (MVE) cutting-plane method \cite{Tarasov1988}, the Chebyshev center cutting-plane method \cite{elzinga1975central}, the ellipsoid method \cite{khachiyan1980polynomial, ellipsoidYudin}, and the analytic center cutting-plane method \cite{goffin1993computation, nesterov1995cutting, atkinson1995cutting}. In this paper, we use the analytic center cutting-plane method since it allows the localization set $\tilde{\mathcal{F}}$ to be described by linear matrix inequalities. In the ACCPM, the query point $P$ is chosen as the analytic center of the localization set $\tilde{\mathcal{F}}$.


\begin{figure}
	\centering
	\includegraphics[width = 0.6 \textwidth]{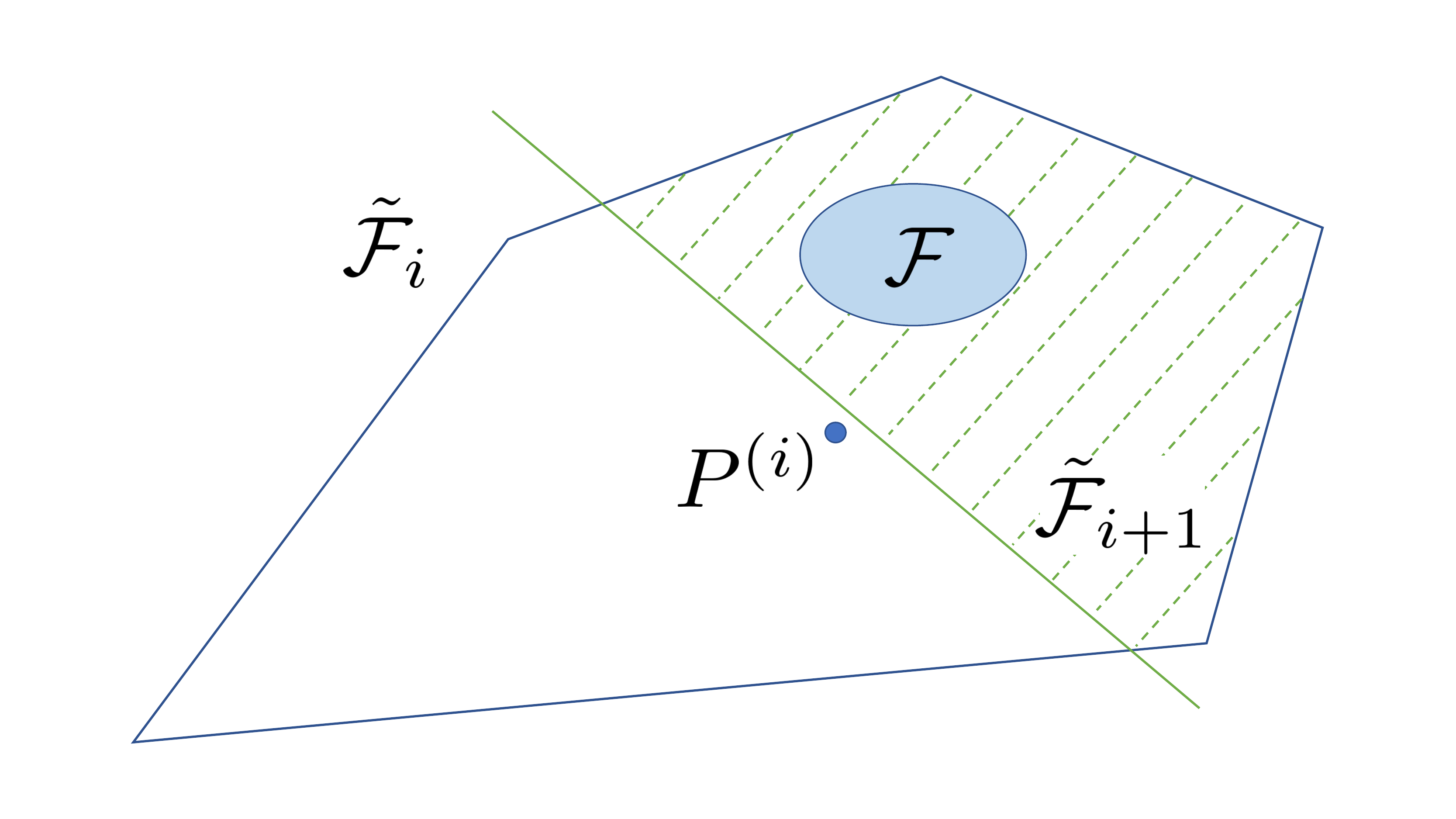}	
	\caption{In the $i$-th iteration, the query point $P^{(i)}$ is chosen as the ``center'' of the localization set $\tilde{\mathcal{F}}_i$ to guarantee the removal of a portion of $\tilde{\mathcal{F}}_i$ from the search space whenever a separating hyperplane (green solid line) is given. The localization set is then updated to $\tilde{\mathcal{F}}_{i+1}$ (shaded set) which is a finer over-approximation of the target set $\mathcal{F}$.}
	\label{fig:cutting_plane_demo}
\end{figure}

\begin{definition}[Analytic center~\cite{boyd2004convex}] \label{def:analytic_center}
	The analytic center $x_{ac}$ of a set of convex inequalities and linear equalities
	$h_i(x) \leq 0, i = 1,\! \cdots\!, m, \ Fx = g$,
	is defined as the solution of the convex problem
	\begin{equation} \label{eq:analytic_center}
	\begin{aligned}
	\underset{x}{\mathrm{minimize}} & \quad - \sum_{i=1}^m  \log(-h_i(x))  \quad \text{subject to} \quad Fx = g.
	\end{aligned}
	\end{equation}
\end{definition}

We design the learning strategy according to the ACCPM by constructing a learner,  which proposes Lyapunov function candidates based on a set of samples, and a verifier, which serves as a cutting-plane oracle and updates the sample set with counterexamples.


\subsection{The learner}
Let $\mathcal{S} = \{x^1, x^2, \cdots, x^N \} \subset \mathcal{X}$ be a collection of samples from $\mathcal{X}$. The localization set $\tilde{\mathcal{F}}$ in the space of $P$ is given by~\eqref{eq:localization},
which represents the learner's knowledge about $\mathcal{F}$ by observing the $k$-step trajectories of the dynamical system~\eqref{eq:nonlinear_sys} starting from $\mathcal{S}$. According to the ACCPM, the learner proposes a Lyapunov function candidate $V^{(k)}(x; P_{ac})$ with $P_{ac}$ as the analytic center of $\tilde{\mathcal{F}}$:

\begin{equation}\label{eq:ac_optimization}
\begin{aligned}
P_{ac}: = \underset{P}{\text{argmin}} & \quad -\sum_{x\in \mathcal{S}} \log( -\Delta V^{(k)}(x,P) ) \\
&\quad \quad \quad  - \log\det (\beta I - P) - \log \det (P -  \alpha I).
\end{aligned}
\end{equation}
This is a convex program that can be solved efficiently through off-the-shell convex optimization solvers. We denote the objective function in~\eqref{eq:ac_optimization} as $\phi(\tilde{\mathcal{F}})$ and call it the potential function on the set $\tilde{\mathcal{F}}$. If \eqref{eq:ac_optimization} is infeasible, then we have a proof that no Lyapunov function exists in the function class of order $k$. Otherwise, the learner proposes $V^{(k)}(x; P_{ac}) = {z^{(k)}}^\top P_{ac} z^{(k)}$ as a Lyapunov function candidate. Due to the log-barrier function in~\eqref{eq:ac_optimization}, $P_{ac}$ is in the interior of $\tilde{\mathcal{F}}$. When the sample set $\mathcal{S}$ is empty, the potential function simply becomes $\phi(\tilde{\mathcal{F}}) = -\log \det(\beta I - P) - \log \det (P - \alpha I)$ with $P_{ac} = \frac{\alpha + \beta}{2}I$ as the optimal solution. 

\subsection{The verifier}
Suppose the learner proposes the Lyapunov function candidate $V^{(k)}(x; P)$ by solving~\eqref{eq:ac_optimization}. Given $V^{(k)}(x; P)$, the verifier either ensures that this function satisfies the constraints in~\eqref{eq:Lyap_pos_constr} and~\eqref{eq:Lyap_diff_constr}, or returns a state where constraint~\eqref{eq:Lyap_pos_constr} or~\eqref{eq:Lyap_diff_constr} is violated as a counterexample. Since the log-barrier function in~\eqref{eq:ac_optimization} guarantees $P \succ 0$, constraint~\eqref{eq:Lyap_pos_constr} is readily satisfied and the verifier must check the violation of constraint~\eqref{eq:Lyap_diff_constr}. This can be done by solving the optimization problem

\begin{alignat}{2} \label{eq:verifier_formulation}
&\underset{x \in \mathcal{X} \setminus \{0\}}{\mathrm{maximize}} \quad && \Delta V^{(k)}(x,P).
\end{alignat}


When $f(x)$ has a mixed-integer formulation
\begin{equation*}
	\mathcal{L}_{\text{gr}(f)} = \{(x, x_+, \lambda, \mu) \in \mathbb{R}^{n_x} \times \mathbb{R}^{n_x} \times \mathbb{R}^{n_\lambda}\times \mathbb{Z}^{n_\mu} \vert \ell(x, x_+, \lambda, \mu) \leq v \},
\end{equation*} 
for Lyapunov function candidates $V^{(k)}(x;P)$ of order $k$, we write problem~\eqref{eq:verifier_formulation} explicitly as a mixed-integer quadratic program (MIQP):

\begin{subequations} \label{eq:MIQP_f}
	\begin{align}
	\begin{split}\label{eq:MIQP_f_obj}
	\underset{ \{x_i\}, \{\lambda_i\}, \{\mu_i\} }{\text{maximize}} & \quad  \begin{bmatrix}
	x_1 \\ x_2 \\ \vdots \\ x_{k+1}
	\end{bmatrix}^\top P \begin{bmatrix}
	x_1 \\ x_2 \\ \vdots \\ x_{k+1}
	\end{bmatrix} - 
	\begin{bmatrix}
	x_0 \\ x_1 \\ \vdots \\ x_{k}
	\end{bmatrix}^\top P \begin{bmatrix}
	x_0 \\ x_1 \\ \vdots \\ x_{k}
	\end{bmatrix} 
	\end{split} \\
	\begin{split} \label{eq:MIQP_f_ROI}
	\text{subject to} & \quad x_0 \in \mathcal{X}
	\end{split} \\
	\begin{split} \label{eq:MIQP_f_exclusion}
	& \quad \lVert x_0 \rVert_\infty \geq \epsilon
	\end{split}\\
	\begin{split} \label{eq:MIQP_f_MIL}
	& \quad \ell(x_i, x_{i+1}, \lambda_i, \mu_i) \leq v_i, i = 0, 1, \cdots, k
	\end{split}
	\end{align}
\end{subequations}
where the objective function~\eqref{eq:MIQP_f_obj} is equal to the Lyapunov difference $\Delta V^{(k)}(x_0,P)$, constraint~\eqref{eq:MIQP_f_ROI} restricts the search of counterexamples inside the ROI $\mathcal{X}$, constraint~\eqref{eq:MIQP_f_exclusion} excludes a small $\ell_\infty$ norm ball centered at the origin $B_\epsilon = \{ x \vert \lVert x \rVert_\infty < \epsilon \}$ from the search space, and constraints~\eqref{eq:MIQP_f_MIL} enforce the dynamical constraint $(x_i, x_{i+1}) \in \text{gr}(f)$, or equivalently $x_{i+1}= f(x_i)$ for $i =0, \cdots, k$. 

Denote $p^*$ the optimal value and $x_0^*$ the $x_0$-component of the optimal solution of~\eqref{eq:MIQP_f}. When $p^* \geq 0$, $x_0^*$ is a counterexample for the Lyapunov function candidate $V^{(k)}(x;P)$ since $\Delta V^{(k)}(x_0^*, P) \geq 0$. Then, the separating hyperplane induced by the counterexample $x_0^*$ is given as $\Delta V^{(k)}(x_0^*, P) = 0$ where $\Delta V^{(k)}(x_0^*, P)$ is interpreted as a linear function in the matrix variable $P$. When $p^* < 0 $, we certify the convergence of system trajectories to a neighborhood of the origin as shown in the following corollary.
\begin{corollary} \label{coro:convergence}
	Let $\Omega = \{ x \vert \allowdisplaybreaks V^{(k)}(x;P) \leq \allowdisplaybreaks \gamma\}$ be the largest sublevel set of $V^{(k)}(x;P)$ inside $\mathcal{X}$ with $\gamma = \inf_{x \in \mathcal{R}\setminus \mathcal{X}} V^{(k)}(x;P)$. Define the successor set of $B_\epsilon$ as $\text{suc}(B_\epsilon):=\{y\vert y = f(x), x \in B_\epsilon \}$. Assume $B_\epsilon \subset \Omega$, $\text{suc}(B_\epsilon) \subset \Omega$, and $\Omega_{loc}$ is the smallest sublevel set of $V^{(k)}(x;P)$ such that $\text{suc}(B_\epsilon) \subseteq \Omega_{loc}$. If $p^* < 0$, then we have $\lim_{t \rightarrow \infty} x_t \in \Omega_{loc}$ for all $x_0 \in \Omega$.
\end{corollary}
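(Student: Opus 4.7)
\textbf{Proof plan for Corollary~\ref{coro:convergence}.} The plan is to show that every trajectory starting in $\Omega$ enters $B_\epsilon$ in finite time, and that once the trajectory reaches $\Omega_{loc}$ it remains there forever. Throughout, I will use three simple observations: (i) because $P$ is obtained from the log-barrier optimization~\eqref{eq:ac_optimization}, $P \succ 0$ and hence $V^{(k)}(x;P) \geq 0$; (ii) by definition of the MIQP~\eqref{eq:MIQP_f}, $p^* < 0$ means $\Delta V^{(k)}(x,P) \leq p^* < 0$ for every $x \in \mathcal{X} \setminus B_\epsilon$; and (iii) since $\text{suc}(B_\epsilon) \subseteq \Omega$ and $\Omega$ is itself a sublevel set of $V^{(k)}(\cdot;P)$, the minimality of $\Omega_{loc}$ yields $\Omega_{loc} \subseteq \Omega \subseteq \mathcal{X}$.

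\textbf{Step 1 (trajectories reach $B_\epsilon$).} Fix $x_0 \in \Omega$ and suppose, for contradiction, that $x_t \notin B_\epsilon$ for all $t \geq 0$. Inductively, I would show $x_t \in \Omega \setminus B_\epsilon$ for every $t$: if $x_t \in \Omega \setminus B_\epsilon \subseteq \mathcal{X} \setminus B_\epsilon$, then by (ii) $V^{(k)}(x_{t+1};P) \leq V^{(k)}(x_t;P) + p^*$, so $V^{(k)}(x_{t+1};P) \leq \gamma$, i.e.\ $x_{t+1} \in \Omega$. Telescoping gives $V^{(k)}(x_t;P) \leq V^{(k)}(x_0;P) + t\, p^*$, which would become negative for large $t$, contradicting (i). Hence there exists a first index $T$ with $x_T \in B_\epsilon$.

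\textbf{Step 2 (entering and remaining in $\Omega_{loc}$).} Since $x_T \in B_\epsilon$, $x_{T+1} = f(x_T) \in \text{suc}(B_\epsilon) \subseteq \Omega_{loc}$. Let $\gamma_{loc}$ denote the sublevel threshold defining $\Omega_{loc}$. I would prove by induction on $s \geq 0$ that $x_{T+1+s} \in \Omega_{loc}$ via two cases. If $x_{T+s} \in B_\epsilon$, then $x_{T+1+s} \in \text{suc}(B_\epsilon) \subseteq \Omega_{loc}$ by definition. Otherwise $x_{T+s} \in \Omega_{loc} \setminus B_\epsilon \subseteq \mathcal{X} \setminus B_\epsilon$ (using (iii)), so (ii) gives $V^{(k)}(x_{T+1+s};P) \leq V^{(k)}(x_{T+s};P) + p^* \leq \gamma_{loc}$, and hence $x_{T+1+s} \in \Omega_{loc}$. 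Combining the two cases closes the induction, so $x_t \in \Omega_{loc}$ for all $t \geq T+1$; since $\Omega_{loc}$ is closed, this is the required statement $\lim_{t\rightarrow\infty} x_t \in \Omega_{loc}$.

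\textbf{Main obstacle.} The delicate part is the invariance argument in Step 2 rather than the decrease argument in Step 1. Outside $B_\epsilon$, $V^{(k)}$ decreases by at least $|p^*|$, but inside $B_\epsilon$ we have no direct control on $V^{(k)}$; the trajectory might re-enter and leave $B_\epsilon$ arbitrarily often. The proof therefore has to rely on the chained containment $\text{suc}(B_\epsilon) \subseteq \Omega_{loc} \subseteq \Omega \subseteq \mathcal{X}$ together with the fact that whenever the trajectory exits $B_\epsilon$, it does so from inside $\Omega_{loc}$, so the Lyapunov decrease immediately pushes it to a smaller value of $V^{(k)}$ and keeps it within the $\gamma_{loc}$-sublevel set. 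Making this case distinction rigorous, and verifying that one does not leave $\mathcal{X}$ between successive entries into $B_\epsilon$, is the only nontrivial book-keeping in the argument.
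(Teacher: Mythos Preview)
Your proposal is correct and follows the same two-step skeleton as the paper's proof: a telescoping/descent argument to show every trajectory from $\Omega$ hits $B_\epsilon$ in finite time, followed by an invariance claim for $\Omega_{loc}$ thereafter. The paper compresses your Step~2 into the single phrase ``the subsequent states will remain inside $\Omega_{loc}$ by construction''; your explicit case split (inside $B_\epsilon$ versus inside $\Omega_{loc}\setminus B_\epsilon$) together with the observation $\Omega_{loc}\subseteq\Omega\subseteq\mathcal{X}$ is exactly the bookkeeping that makes this sentence rigorous, so there is no genuine methodological difference.
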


\begin{proof}
	First, note that all trajectories starting from $x_0 \in \Omega$ and $x_0 \notin B_\epsilon$ reach $B_\epsilon$ in a finite number of steps since $V^{(k)}(x_{t+1};P) - V^{(k)}(x_t; P) \leq p^* < 0$ as long as $x_t \notin B_\epsilon$ and $V^{(k)}(x_t; P)$ is lower-bounded by $0$ for all $x_t$. If $x_t$ never reaches $B_\epsilon$, we will have a contradiction that $V^{(k)}(x_N; P) < 0$ for some $N$. After the trajectories reach $B_\epsilon$, the subsequent states will remain inside $\Omega_{loc}$ by construction. Therefore, we have $\lim_{t \rightarrow \infty} x_t \in \Omega_{loc}$ for all $x_0 \in \Omega$.
\end{proof}

The alternation between the learner and the verifier is summarized in Algorithm~\ref{alg:ACCPM}. In the next subsection, we show that when the target set $\mathcal{F}$ satisfies Assumption~\ref{assump:nonempty}, our proposed algorithm is guaranteed to find a feasible point in $\mathcal{F}$ in a finite number of steps. 

\begin{algorithm}[htb]
\caption{Learning-based Lyapunov function synthesis}\label{alg:ACCPM}
	\begin{algorithmic}[1]
		\Procedure{LearningLyapunov}{}
		\State $\mathcal{S}_1 = \emptyset$ \Comment{initialize the sample set}
		\State $i=1$
		\While {True}
		\State Generate $\tilde{\mathcal{F}}_i$ from sample set $\mathcal{S}_i$ by~\eqref{eq:localization}.
		\If {$\tilde{\mathcal{F}}_{i} = \emptyset$} 
		\State Return: Status = Infeasible, $P_* = \emptyset$.
		\EndIf	
		\State $P^{(i)} = \arg \min \eqref{eq:ac_optimization} \text{ with } $$\mathcal{S}_i$  \Comment{find analytic center}
		\State solve~\eqref{eq:MIQP_f} with $P^{(i)}$  \Comment{query the verifier}
		\If {$\max \ $\eqref{eq:MIQP_f} < 0} 
		\State Return: Status = Feasible, $P_* = P^{(i)}$.
		\Else  
		\State  $x_*^{(i)} = \arg \min_{x_0} \eqref{eq:MIQP_f}$ \Comment{find a counterexample}
		\State $\mathcal{S}_{i+1} = \mathcal{S}_i \cup \{x^{(i)}_*\}$
		\EndIf	
		\State $i = i+1$
		\EndWhile
		\EndProcedure
	\end{algorithmic}
\end{algorithm}

\begin{remark}
In this paper, the verifier is constructed as a global optimization problem~\eqref{eq:verifier_formulation}. Alternative constructions of the verifier do exist, e.g., by using SMT solvers as shown in~\cite{ahmed2020automated, abate2020formal}. The termination results in the next subsection will always hold no matter how the verifier is formulated.  
\end{remark}

\subsection{Convergence Analysis}
The convergence and complexity of the ACCPM have been studied in~\cite{atkinson1995cutting,nesterov1995cutting, ye1992potential, luo2000polynomial,goffin1996complexity, sun2002analytic} under various assumptions on the localization set, the form of the separating hyperplane, whether multiple cuts are applied, etc. Directly related to Algorithm~\ref{alg:ACCPM} and the search for $V^{(k)}(x;P)$ is~\cite{sun2002analytic} which analyzes the complexity of the ACCPM with a matrix variable and semidefiniteness constraints. Notably, it provides an upper bound on the number of iterations that Algorithm~\ref{alg:ACCPM} can run before termination when the target set is non-empty. In~\cite{sun2002analytic}, it is assumed that
\begin{itemize}
	\item A1: $\mathcal{F}$ is a convex subset of $\mathbb{S}^{n}$.
	\item A2: See Assumption~\ref{assump:nonempty}.
	\item A3: $\mathcal{F} \subset \{P \in \mathbb{S}^{n} \vert 0 \preceq P \preceq I \}$.
\end{itemize}
For the Lyapunov function candidate class of order $k$, we have $n = (k+1)n_x$. Let the ACCPM start with the localization set $\tilde{\mathcal{F}}_1 = \{ P \vert 0 \preceq P \preceq I\}$ and initialize the first query point $P^{(1)} = \frac{1}{2}I$ correspondingly. If at iteration $i$ a query point $P^{(i)}$ is rejected by the oracle, a separating hyperplane of the form $\langle D_i, P - P^{(i)} \rangle = 0$ with $\lVert D_i \rVert_F = 1$ is given by the verifier. By induction, we have that at iteration $i > 1$, the localization set $\tilde{\mathcal{F}_{i}}$ is 
\begin{equation*}
\tilde{\mathcal{F}_{i}} = \{ P \vert 0 \preceq P \preceq I, \langle D_j, P \rangle \leq c_j, j = 1, \cdots, i-1 \}.
\end{equation*}
with $D_j$ defining the separating hyperplane and $c_j = \langle D_j, P^{(j)} \rangle$. The query point at iteration $i$ is given by $P^{(i)} =  \arg\min \phi(\tilde{\mathcal{F}_{i}})$
with the potential function
\begin{equation*}
\phi(\tilde{\mathcal{F}_{i}}) = -\sum_{j = 1}^{i-1} \log(c_j - \langle D_j, P\rangle) - \log \det (I -P) - \log \det(P).
\end{equation*}

\begin{theorem}
\label{thm:ACCPM_termination}
Under Assumption~\ref{assump:nonempty} on the target set $\mathcal{F}$, Algorithm~\ref{alg:ACCPM} finds a feasible point in~$\mathcal{F}$ in at most $O(((k+1)n_x)^3/\epsilon^2)$ iterations.
\end{theorem}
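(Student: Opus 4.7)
The plan is to reduce Algorithm~\ref{alg:ACCPM} to the canonical ACCPM setting analyzed in~\cite{sun2002analytic} and then invoke their iteration complexity bound verbatim. The three hypotheses A1, A2, A3 listed just before the theorem statement must be matched. A1 is immediate since $\mathcal{F}$ defined in~\eqref{eq:target_set} is the intersection of a linear matrix inequality family (semidefinite plus the infinite family of affine strict inequalities $\Delta V^{(k)}(x,P)<0$) inside $\mathbb{S}^n$ with $n=(k+1)n_x$. A2 is exactly Assumption~\ref{assump:nonempty}. The content of the reduction is therefore to arrange A3 and to show that the separating hyperplanes generated by the verifier have the canonical form $\langle D_i,P-P^{(i)}\rangle=0$ with $\|D_i\|_F=1$ used in~\cite{sun2002analytic}.

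First I would perform an affine change of variables $\hat P=(P-\alpha I)/(\beta-\alpha)$, which maps the box $\{\alpha I\preceq P\preceq\beta I\}$ onto $\{0\preceq\hat P\preceq I\}$ and hence maps $\mathcal{F}$ into a subset of that box, yielding A3. Under this linear change of variables, the Frobenius ball of radius $\epsilon$ inside $\mathcal{F}$ guaranteed by Assumption~\ref{assump:nonempty} becomes a Frobenius ball of radius $\hat\epsilon=\epsilon/(\beta-\alpha)$ around $\hat P_{center}$, so A2 is preserved with a rescaled radius that only changes the final bound by a constant. The potential function in~\eqref{eq:ac_optimization} is invariant (up to an additive constant) under this affine change, so the iterates produced by Algorithm~\ref{alg:ACCPM} coincide with those of the canonical ACCPM applied to the transformed problem.

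Next I would show that the cut returned by the verifier fits the canonical form. When the verifier produces a counterexample $x_0^\ast$ with $\Delta V^{(k)}(x_0^\ast,P^{(i)})\geq 0$, note that $\Delta V^{(k)}(x_0^\ast,\cdot)$ is \emph{linear} in the matrix variable $P$: indeed $\Delta V^{(k)}(x_0^\ast,P)=\langle M(x_0^\ast),P\rangle$ where
\begin{equation*}
M(x_0^\ast)=z^{(k)}(f(x_0^\ast))z^{(k)}(f(x_0^\ast))^\top-z^{(k)}(x_0^\ast)z^{(k)}(x_0^\ast)^\top.
\end{equation*}
Setting $D_i=M(x_0^\ast)/\|M(x_0^\ast)\|_F$ and $c_i=\langle D_i,P^{(i)}\rangle$ produces a unit-Frobenius-norm central cut $\langle D_i,P\rangle\leq c_i$ that is valid for $\mathcal{F}$ (because every $P\in\mathcal{F}$ satisfies $\Delta V^{(k)}(x_0^\ast,P)<0\leq \Delta V^{(k)}(x_0^\ast,P^{(i)})$) and strictly cuts off $P^{(i)}$. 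A symmetric rewriting handles cuts arising from the box constraints if one were to hit them. After this normalization, the localization set $\tilde{\mathcal{F}}_i$ and the potential minimization~\eqref{eq:ac_optimization} are precisely the ones analyzed in~\cite{sun2002analytic}.

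With the reduction in place, I would apply Theorem~4.1 of~\cite{sun2002analytic}, which states that under A1--A3 the ACCPM with central cuts and analytic-center query point terminates after at most $O(n^3/\hat\epsilon^2)$ iterations. Substituting $n=(k+1)n_x$ and absorbing the constant $(\beta-\alpha)$ into the big-$O$ yields the claimed $O(((k+1)n_x)^3/\epsilon^2)$ bound. The main obstacle, and the step that deserves the most care, is the cut-normalization argument: one must verify that writing $\Delta V^{(k)}(x_0^\ast,P)$ as a linear functional in $P$ and rescaling to unit Frobenius norm really does produce the canonical central cut used in~\cite{sun2002analytic}, and that the affine variable change preserves both the analytic-center iterates and the radius condition up to an explicit factor; once this bookkeeping is established, the iteration count is a direct consequence of the cited complexity theorem.
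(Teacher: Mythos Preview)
Your proposal is correct and follows essentially the same approach as the paper's proof: both reduce to the complexity result of~\cite{sun2002analytic} by verifying A1--A3 and by writing the verifier's counterexample cut as $\langle D_i,P\rangle\le\langle D_i,P^{(i)}\rangle$ with $D_i$ the Frobenius-normalized difference of the two outer products $z^{(k)}(f(x_0^\ast))z^{(k)}(f(x_0^\ast))^\top-z^{(k)}(x_0^\ast)z^{(k)}(x_0^\ast)^\top$. The only cosmetic difference is that the paper dispatches A3 by simply taking $\alpha=0,\ \beta=1$ in~\eqref{eq:target_set}, whereas you achieve the same normalization via the affine change $\hat P=(P-\alpha I)/(\beta-\alpha)$.
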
 

\begin{proof}
The proof follows from~\cite{sun2002analytic} which states for a general target set $\mathcal{F}\subset \mathbb{S}^{n}$ under assumptions A$1$ to A$3$, the analytic center cutting-plane method with separating hyperplanes of the form $\langle D_i, P \rangle \leq c_i$ is shown to find a feasible point in at most $O(n^3/\epsilon^2)$ iterations. For the sequence of localization sets $\tilde{\mathcal{F}}_i$, \cite{sun2002analytic} computes an upper bound on the potential function $\phi(\tilde{\mathcal{F}}_i)$ which is approximately $i \log( \frac{1}{\epsilon})$, and a lower bound on $\phi(\tilde{\mathcal{F}}_i)$ which is proportional to $\frac{i}{2}\log(\frac{i}{n^3})$. Since the ACCPM must terminate before the lower bound exceeds the upper bound of $\phi(\tilde{\mathcal{F}}_i)$, we obtain the $O(n^3/\epsilon^2)$ upper bound on the number of iterations.

To show that the result in~\cite{sun2002analytic} applies to Algorithm~\ref{alg:ACCPM}, note that for each counterexample $x_*^{(i)}$ found by the verifier in iteration $i$, the separating hyperplane can be constructed as $\Delta V^{(k)}(x_*^{(i)}, P) \leq \Delta V^{(k)}(x_*^{(i)},P^{(i)})$ since $\Delta V^{(k)}(x_*^{(i)},P^{(i)}) \geq 0$. We can rewrite this cutting plane in the form $\langle D_i, P \rangle \leq \langle D_i, P^{(i)} \rangle$ by setting $\hat{D}_i = z_{k,*}^{+, (i)} z_{k,*}^{+,(i), \top} - z_{k,*}^{(i)} z_{k,*}^{(i), \top}, D_i = \hat{D}_i / \lVert \hat{D}_i \rVert_F$, where $z_{k,*}^{(i)}$ denotes the basis~\eqref{eq:basis} with $x = x_*^{(i)}$, and $z_{k,*}^{+, (i)}$ denotes the basis~\eqref{eq:basis} after setting $x = f(x_*^{(i)})$. Then with $\alpha = 0, \beta = 1$ in~\eqref{eq:target_set}, Algorithm~\ref{alg:ACCPM} satisfies assumptions A$1$ to A$3$ and it terminates in at most $O(((k+1)n_x)^3/\epsilon^2)$ iterations according to~\cite{sun2002analytic}.
\end{proof}


Theorem~\ref{thm:ACCPM_termination} provides a finite-step termination guarantee for Algorithm~\ref{alg:ACCPM} when the target set satisfies Assumption~\ref{assump:nonempty}. However, when $\mathcal{F}$ is empty, we do not have such a guarantee. Certification of the non-existence of Lyapunov functions in $V^{(k)}(x;P)$ relies on detecting that an over-approximation $\tilde{\mathcal{F}}$ is empty. Hence, a quick expansion of the sample set $\mathcal{S}$ as shown in Section~\ref{sec:early_termination} is preferred.


\subsection{Implementation}

\subsubsection{Complexity of the learner}
For a sample set $\mathcal{S}$ with $N$ samples, the localization set $\tilde{\mathcal{F}}$ in ~\eqref{eq:localization} is described by $N$ linear as well as two semidefinite constraints on the variable $P \in \mathbb{S}^{(k+1)n_x}$. We first decide if $\tilde{\mathcal{F}}$ is empty by solving an SDP feasibility problem. If the SDP is infeasible, then $\tilde{\mathcal{F}} = \emptyset$ and so is $\mathcal{F}$. If $\mathcal{\mathcal{F}}$ is non-empty, we move on to the analytic center problem~\eqref{eq:ac_optimization} which can be solved, e.g., through an infeasible start Newton's method~\cite{boyd2004convex}.


\subsubsection{Complexity of the MIQP}
Since MIQP is well-known to be NP-hard, we use the number of binary variables, which we denote by $N_{\mu}$ in~\eqref{eq:MIQP_f}, as a rough measure of the complexity of~\eqref{eq:MIQP_f}. When $f(x)$ is a PWA function with $N_{mode}$ modes, we have $N_{mode}$ binary variables in the MIL formulation of $f(x)$. For the LC system~\eqref{eq:LCS}, $N_\mu$ equals the dimension of the orthogonal variables $v$ and $w$. $N_\mu$ is explicitly given in the MLD system and is equal to the number of neurons in the mixed-integer formulation of the ReLU networks. It follows that for the LC systems and ReLU networks, it is possible to use a small number of integer variables to encode a PWA system with many more modes. Since we need to evaluate the hybrid dynamics $f(x)$ for $k$ times when $V^{(k)}(x;P)$ is applied, the number of binary variables $N_\mu$ in~\eqref{eq:MIQP_f} is largely linear in the order $k$.

The actual solving time of the MIQP has a complex dependence not only on the number of variables and constraints but also on how the constraints are formulated. The exploration of the numerical performance of the proposed algorithm is left for future research.

\subsubsection{Solvability of the MIQP}
The optimization problem~\eqref{eq:MIQP_f} is a nonconvex MIQP since the quadratic objective function is indefinite. Therefore, the relaxation of the problem after removing the integrality constraints would result in a nonconvex quadratic program. Nonconvex MIQP can be solved to global optimality through Gurobi v$9.0$~\cite{gurobi} by transforming the nonconvex quadratic expression into a bilinear form and applying spatial branching~\cite{belotti2013mixed}. More information on solving nonconvex mixed-integer nonlinear programming can be found in~\cite{vigerske2013decomposition, tawarmalani2013convexification, belotti2009branching}. In this paper, we rely on Gurobi to solve the nonconvex MIQP~\eqref{eq:MIQP_f} automatically.

\subsubsection{Exclusion of the origin}
In constraint~\eqref{eq:MIQP_f_exclusion}, we add a guard $B_\epsilon$ at the origin to approximate the exact constraint $x\in \mathcal{X} \setminus \{0\}$. Since constraint~\eqref{eq:MIQP_f_exclusion} allows a mixed-integer linear formulation through the big-M method, problem~\eqref{eq:MIQP_f} is an MIQP. Adding $B_\epsilon$ bounds $p^*$ off from $0$ if $V_k(x;P)$ is in fact a Lyapunov function and we can decide the negativity of $p^*$ by checking if $p^* < -\epsilon_{tol}$ for some tolerance $\epsilon_{tol} > 0$ to handle round-off errors in computation. 

When the dynamics $x_+ = f(x)$ is linear inside $B_\epsilon$, i.e., $x_+ = Ax$ for $x \in B_\epsilon$, we can show convergence to the origin of the system trajectories starting inside $B_\epsilon$ by checking the magnitude of the eigenvalues of $A$. Combined with Corollary~\ref{coro:convergence}, asymptotic convergence inside the sublevel set $\Omega$ can be established.

\subsubsection{Early termination of the MIQP}
\label{sec:early_termination}
We can terminate the Branch $\&$ Bound~\cite{wolsey1999integer} algorithm in solving~\eqref{eq:MIQP_f} once a feasible solution is found with non-negative objective function since in this case we already have a counterexample.

\section{Numerical examples}
\label{sec:simulation}
We demonstrate our method through two examples: closed-loop MPC systems and ReLU neural network controlled PWA systems. In particular, we compare the performances of our method with the LMI-based Lyapunov function synthesis methods~\cite{biswas2005survey} on the MPC example. Algorithm~\ref{alg:ACCPM} is implemented in Python $3.7$ with Gurobi v9.0~\cite{gurobi} and the LMI-based method is implemented in the MPT3 toolbox~\cite{MPT3} with Mosek~\cite{mosek} in Matlab. All the simulation is implemented on an Intel i7-6700K CPU with $32$ GB of RAM.

Throughout the numerical experiments in this section, Algorithm~\ref{alg:ACCPM} is run with $\alpha = 0.0, \beta = 1.0$ and $\epsilon_{tol} = 10^{-8}$ to decide negativity of the optimal value of the MIQP. The Gurobi solver is set with feasibility tolerance $10^{-9}$, integer tolerance $10^{-9}$, and optimality tolerance $10^{-9}$. In addition, we terminate the MIQP once it finds a feasible solution that generates an objective $\geq 10^{-4}$ which means a counterexample is already found.

\subsection{A 2-dimensional closed-loop MPC system}
\label{sec:2d_example}
For an unstable linear system $x_+ = Ax + Bu$ with 
\begin{equation} \label{eq:2d_dyn}
A = \begin{bmatrix}
1.2 & 1.2 \\ 0 & 1.2
\end{bmatrix},\quad  B = \begin{bmatrix}
1 \\ 0.5 \end{bmatrix},
\end{equation}
we design an MPC controller with horizon $T = 10$, state constraint $[-5 \ -5]^\top \leq x \leq [5 \ 5]^\top$, control input constraint $-1 \leq u \leq 1$, stage cost $p(x, u) = x^\top Q x  + u^\top R u$ with $Q = 10 I, R = 1$, and terminal cost $q(x) = x^\top P_\infty x$ where $P_\infty = \text{DARE}(A, B, Q, R)$ is the solution to the discrete algebraic Riccati equation defined by $(A, B, Q, R)$. The terminal set is chosen as the maximum positive invariant set~\cite[Chapter 10]{borrelli2017predictive} of the closed-loop system $x_+ = (A + B K_\infty) x$ where $K_\infty = -(B^\top P_\infty B + R)^{-1}B^\top P_\infty A$.

\begin{figure}
	\centering
	\includegraphics[width = 0.5\textwidth]{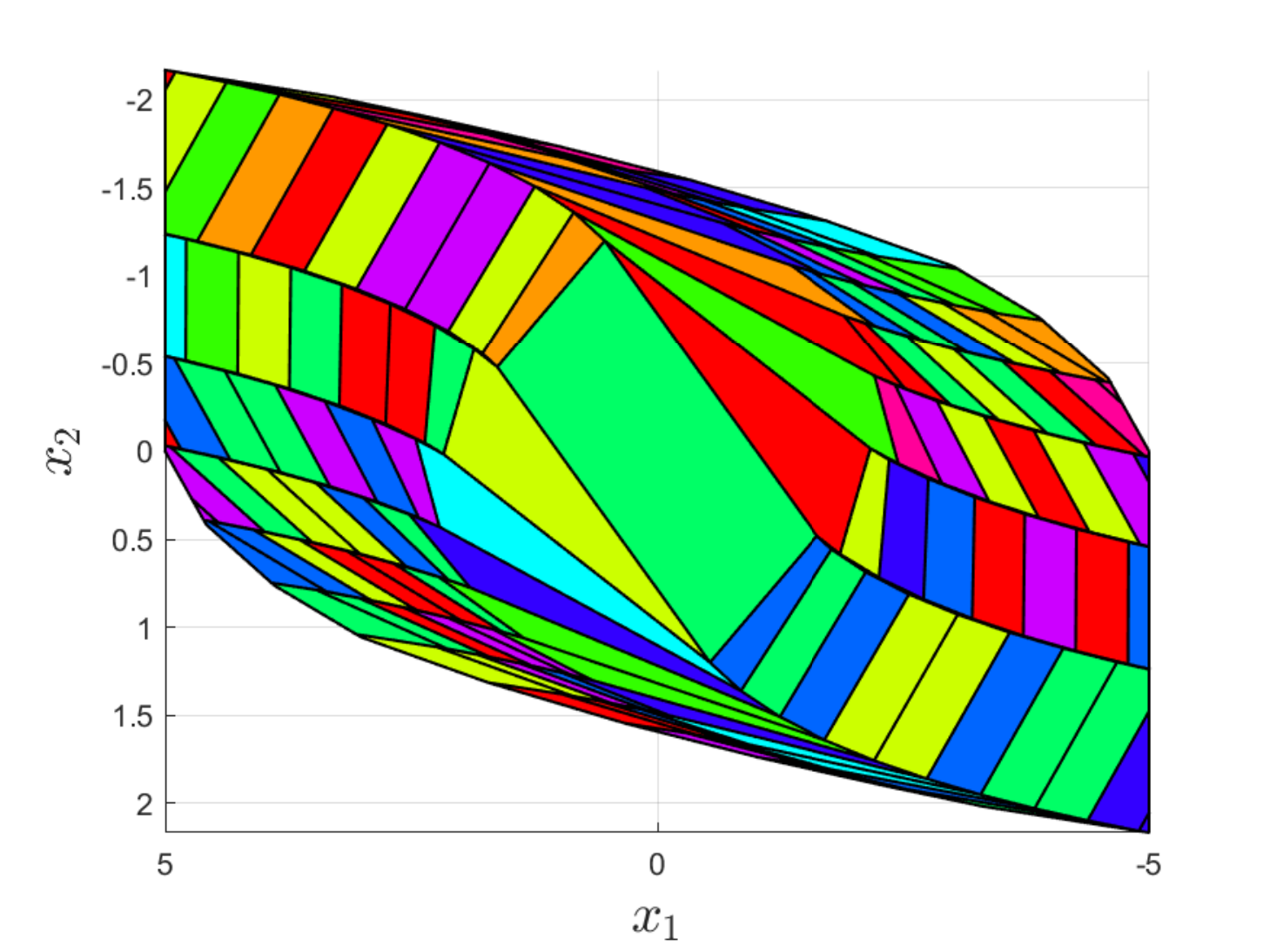}
	\caption{Domain $\mathcal{R}$ and the $211$ modes of the closed-loop MPC system of~\eqref{eq:2d_dyn}.}
	\label{fig:MPC_2d_domain}
\end{figure}

With the above setup, we obtain the explicit MPC controller, which is a PWA function of the state $x$ with $211$ partitions, through the MPT3 toolbox in Matlab. The domain $\mathcal{R}$ of the explicit MPC is a polytope shown in Fig.~\ref{fig:MPC_2d_domain} together with its partitions. After obtaining $\mathcal{R}$, we verify that it is positive invariant for the closed-loop dynamics. Through the LMI-based method, we are able to synthesize a discontinuous PWQ Lyapunov function in MPT3 which verifies $\mathcal{R}$ is the ROA for the closed-loop MPC system. The total running time is $38.890$ seconds, with $0.25$ spent in solving the constructed SDP and the rest in computing the transition map.

\subsubsection{Algorithm~\ref{alg:ACCPM} with PWA representation}
We import the PWA representation of the closed-loop MPC dynamics which we denote as $x_+ = f_{cl}(x)$ from MPT3 and run Algorithm~\ref{alg:ACCPM} with the mixed-integer formulation of $f_{cl}(x)$ as shown in Section~\ref{sec:PWA_representation}. Since the domain $\mathcal{R}$ is positively invariant, we set $\mathcal{X} = \mathcal{R}$. For Lyapunov function candidates of order $k = 0, 1$, Algorithm~\ref{alg:ACCPM} certifies the non-existence of Lyapunov functions after $1.709$ seconds with $4$ iterations and $28.333$ seconds with $7$ iterations, respectively. With $V^{(k)}(x;P)$ of order $k= 2$, Algorithm~\ref{alg:ACCPM} terminates in $14$ iterations with a valid Lyapunov function candidate and the total running time is $4379.716$ seconds. We plot the counterexamples found in each iteration in Fig.~\ref{fig:MPC_2d_counterexamples}. The accumulated running time of Algorithm~\ref{alg:ACCPM} in each iteration is shown in Fig.~\ref{fig:time_compare}. 


\begin{figure}[htb!] 
	\centering
	\begin{subfigure}[t]{0.49 \columnwidth}
		\centering
		\includegraphics[width = \columnwidth]{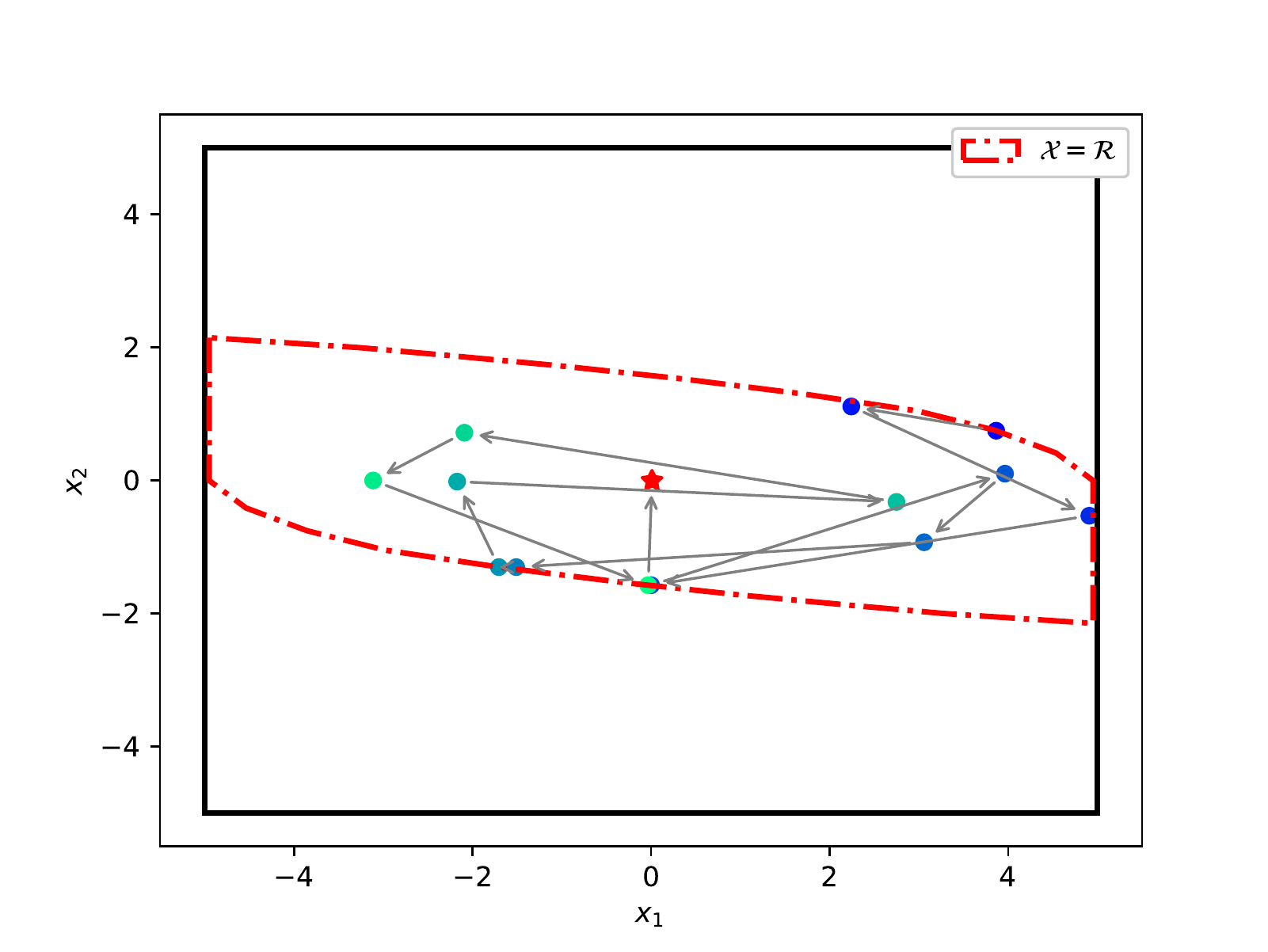}
		\caption{Sequence of counterexamples found in Algorithm~\ref{alg:ACCPM}.}
		\label{fig:MPC_2d_counterexamples}
	\end{subfigure} \hfil
	\begin{subfigure}[t]{0.49 \columnwidth}
		\centering
		\includegraphics[width =  \columnwidth]{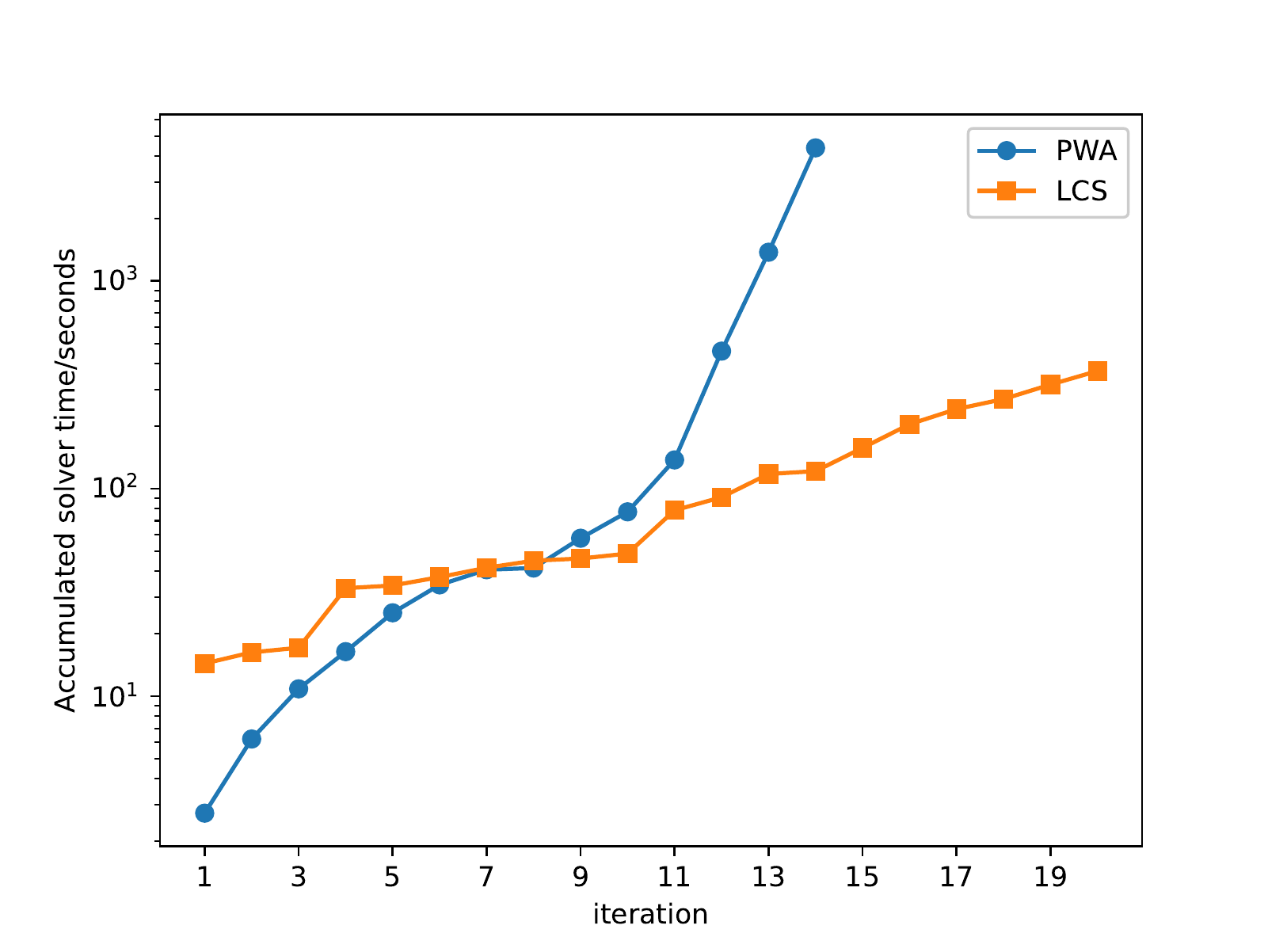}
		\caption{Accumulated running time of Algorithm~\ref{alg:ACCPM}. }
		\label{fig:time_compare}
	\end{subfigure}
	\caption{(a) Sequence of counterexample states found by the verifier in Algorithm~\ref{alg:ACCPM} with the PWA representation of the closed-loop MPC system. (b) Accumulated running time of Algorithm~\ref{alg:ACCPM} in each iteration with mixed-integer formulations induced by the PWA (blue) and LC (orange) representations of the closed-loop MPC system. }
\end{figure}


\subsubsection{Algorithm~\ref{alg:ACCPM} with LCS representation}
As shown in Section~\ref{sec:LCS_representation} and~\cite{simon2016stability}, we can obtain a mixed-integer formulation of the closed-loop MPC dynamics through its LC system representation instead of the PWA one. Based on this mixed-integer formulation, we run Algorithm~\ref{alg:ACCPM} with Lyapunov function candidates $V^{(k)}(x;P)$ of order $k = 2$. The algorithm terminates in $368.618$ seconds with $20$ iterations and returns a valid Lyapunov function which certifies that the domain $\mathcal{R}$ is the ROA. The accumulated running time at each iteration is summarized in Fig.~\ref{fig:time_compare}. 


\subsubsection{Discussion}
Fig.~\ref{fig:time_compare} shows that Algorithm~\ref{alg:ACCPM} depends on the mixed-integer representation of the system. In this example, we need $211$ binary variables to describe the map $x_+ = f_{cl}(x)$ using the PWA representation, while with the LCS representation, we only need to use $64$ binary variables to describe the map $u = \pi_{MPC}(x)$, and hence the closed-loop dynamics. However, when compared with the LMI-based method which synthesizes a PWQ discontinuous  Lyapunov functions in $38.890$ seconds, Algorithm~\ref{alg:ACCPM} is rather inefficient. This is partly due to that the continuous Lyapunov function candidate class $V^{(k)}(x;P)$ is more conservative than the discontinuous one~\cite{biswas2005survey}. To compensate for the conservatism, we need to apply high order $V^{(k)}(x;P)$ which increases the complexity of the MIQP. In the next subsection, we show that Algorithm~\ref{alg:ACCPM} can synthesize a Lyapunov function when the LMI-based method fails.

\subsection{A 4-dimensional closed-loop MPC system}
Consider model predictive control of a randomly generated $4$-dimensional open-loop unstable linear system given by
\begin{equation*}
A = \begin{bmatrix}
0.4346 &   -0.2313&   -0.6404&    0.3405 \\
-0.6731&    0.1045&  -0.0613 &   0.3400 \\
-0.0568&    0.7065&  -0.0861&    0.0159\\
0.3511 &   0.1404 &   0.2980&   1.0416
\end{bmatrix}, \ 
B = \begin{bmatrix}
0 \\ -0.0065 \\ -0.5238 \\ 0.4605
\end{bmatrix}.
\end{equation*}
With the state constraint $\lVert x \rVert_\infty \leq 5$ and the input constraint $-1 \leq u \leq 1$, we design the MPC controller by choosing horizon $T = 10$, stage cost with $Q = 10I, R = 1$, terminal cost with $P_\infty$ and the terminal set as in Section~\ref{sec:2d_example}. The explicit MPC controller is constructed through MPT3 and has $193$ partitions in its domain $\mathcal{R}$, which is validated to be positive invariant under the closed-loop dynamics. Although it took only $29.9$ seconds to compute the transition map, the constructed SDP is ill-posed and Mosek failed to find a feasible solution. 

Then we apply Algorithm~\ref{alg:ACCPM} to synthesize a Lyapunov function $V^{(k)}(x;P)$ with $k = 1$ and the LCS representation of the closed-loop MPC system. The ROI is chosen as $\mathcal{X} = \mathcal{R}$. Algorithm~\ref{alg:ACCPM} terminates in $9$ iterations with a valid Lyapunov function candidate and therefore proves that the closed-loop system is asymptotically stable in the domain $\mathcal{R}$. The total running time is $680.515$ seconds and the accumulated running time in each iteration is plotted in Fig.~\ref{fig:MPC_4d_time}.

\begin{figure}
\centering
\includegraphics[width = 0.5 \textwidth]{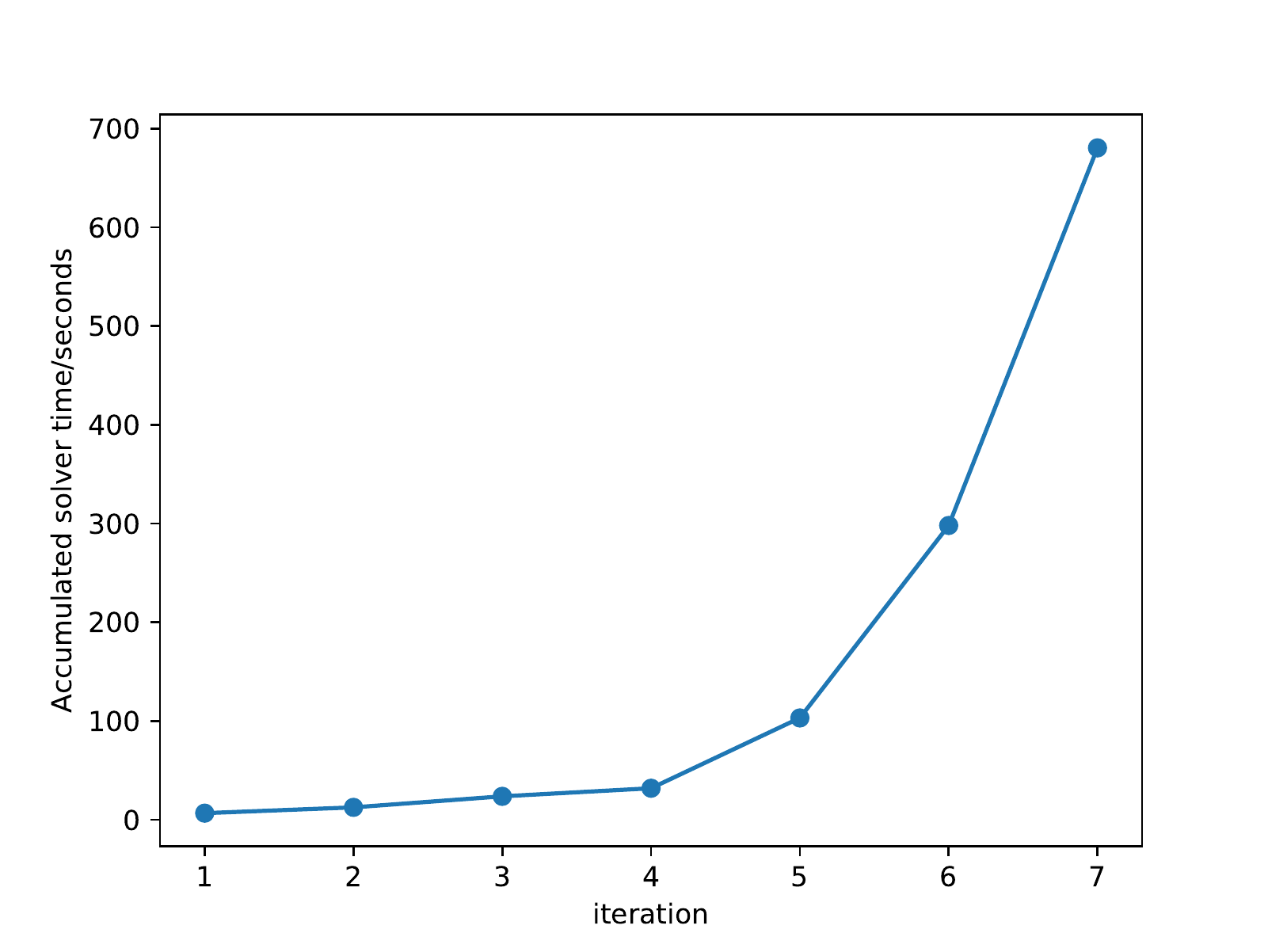}
\caption{Accumulated solver time of Algorithm~\ref{alg:ACCPM} in each iteration for the $4$-dimensional closed-loop MPC system.}
\label{fig:MPC_4d_time}
\end{figure}

\subsection{Neural network controlled PWA system}
We use a hybrid system example from~\cite{marcucci2017approximate} and consider the inverted pendulum shown in Fig.~\ref{fig:pendulum_and_controller} with parameters $m = 1, \ell = 1, g = 10, k = 100, d = 0.1$. We denote the angle and the angular velocity of the pendulum by $q$ and $\dot{q}$, respectively, and define the system state as $x = (q, \dot{q})$. By linearizing the dynamics of the inverted pendulum around $x = 0$, we obtain a hybrid system which has two modes: not in contact with the elastic wall (mode $1$) and in contact with the elastic wall (mode $2$). After discretizing the model using the explicit Euler scheme with a sampling time $h = 0.01$, a PWA model $x_+ = \psi(x, u)$ of the form~\eqref{eq:pwa_dynamics} is obtained with the following parameters
\begin{equation} \label{eq:pendulum_dynamics}
\begin{aligned}
& A_1 = \begin{bmatrix}
1 & 0.01 \\ 0.1 & 1
\end{bmatrix}, B_1 = \begin{bmatrix}
0 \\ 0.01 
\end{bmatrix}, c_1 = \begin{bmatrix}
0 \\ 0
\end{bmatrix}, \\
& \mathcal{R}_1 = \{x \vert [-0.2 \ -1.5]^\top \leq x \leq [0.1 \ 1.5]^\top \}.
\\ 
& A_2 = \begin{bmatrix}
1 & 0.01\\
-0.9 & 1 
\end{bmatrix}, B_2 = \begin{bmatrix}
0 \\ 0.01
\end{bmatrix}, c_2 = \begin{bmatrix}
0 \\ 0.1
\end{bmatrix}, \\
& \mathcal{R}_2 = \{x \vert [0.1 \ -1.5]^\top \leq x \leq [0.2 \ 1.5]^\top\}.
\end{aligned}
\end{equation}

\begin{figure}[htb!] 
	\centering
	\begin{subfigure}[t]{0.49 \columnwidth}
		\centering
		\includegraphics[width = 0.65 \columnwidth]{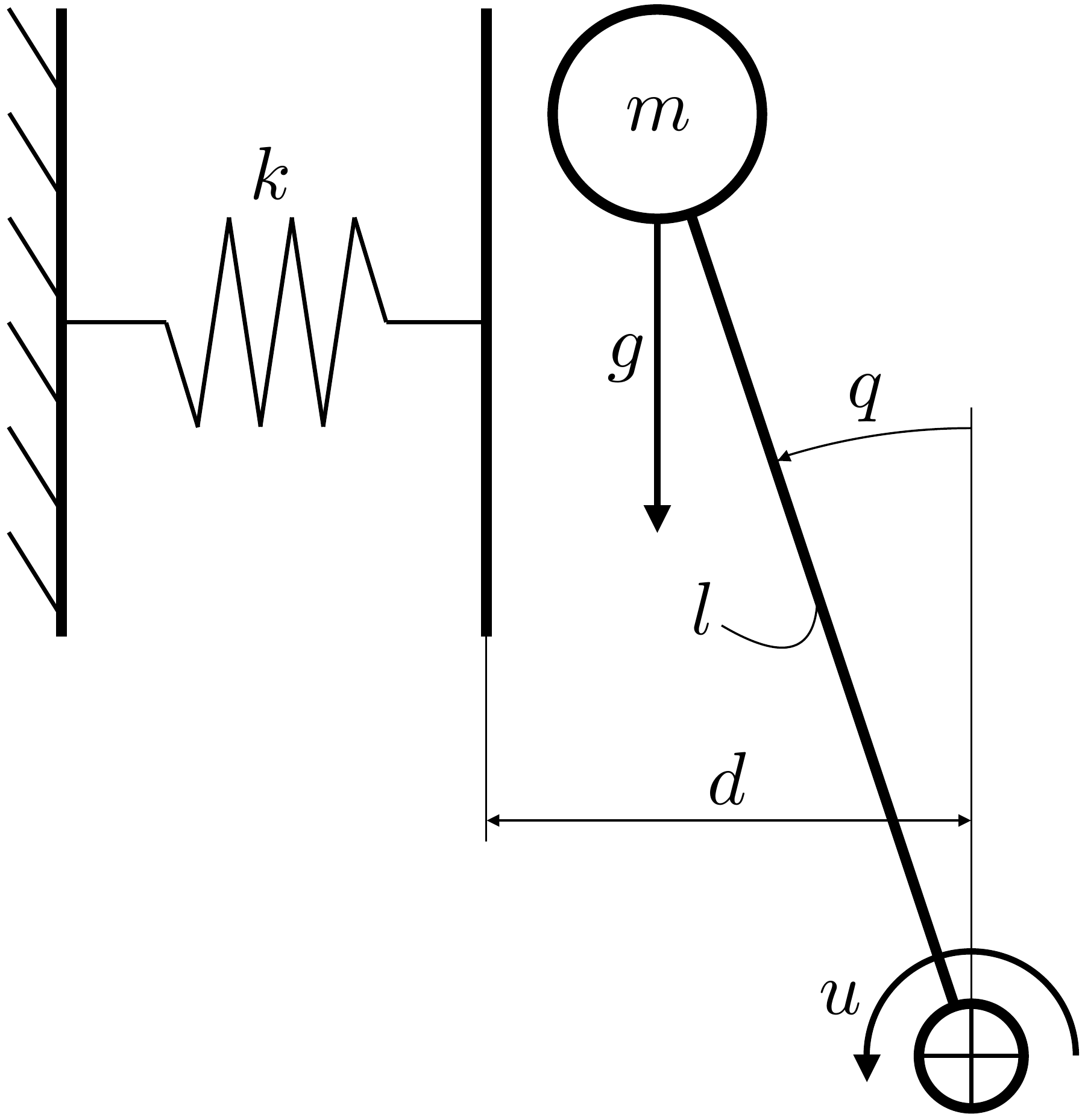}
	\end{subfigure} \hfil
	\begin{subfigure}[t]{0.49 \columnwidth}
		\centering
		\includegraphics[width =  \columnwidth]{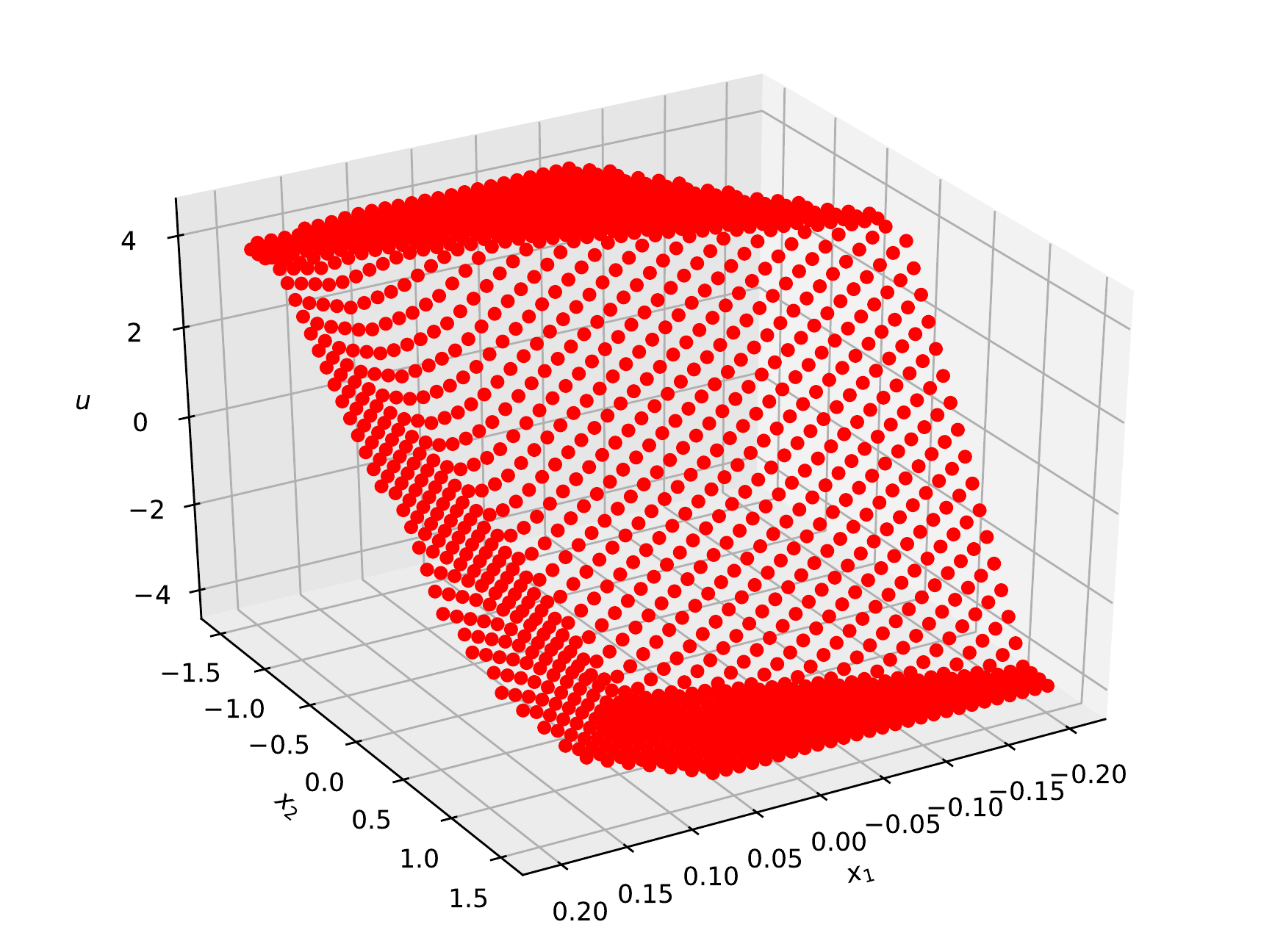}
	\end{subfigure}
	\caption{A ReLU neural network (right) that approximates a hybrid MPC controller is applied on the inverted pendulum system with an elastic wall (left).}
	\label{fig:pendulum_and_controller}
\end{figure}

We then synthesize a hybrid MPC controller $\pi_{MPC}(x)$ for the PWA system~\cite{marcucci2019mixed} where the control input constraints are given by $-4 \leq u \leq 4$ and the horizon of MPC is set as $T = 10$. The stage and terminal costs are given by $Q = I, R = 1$, and $P_\infty = \text{DARE}(A_1, B_1, Q, R)$. We evaluate $\pi_{MPC}(x)$ on a uniform $40 \times 40$ grid samples from the state space and let the reference ROI $\mathcal{X}_0$ be the convex hull of all the feasible state samples. Then the ROI $\mathcal{X} = \gamma \mathcal{X}_0$ with $0 < \gamma \leq 1$ is applied to guide the search for an estimate of ROA.

A total number of $1354$ feasible samples of state and control input pairs $(x, \pi_{MPC}(x))$ are generated to train a ReLU neural network $\pi(x)$ in Keras~\cite{chollet2015keras} to approximate the MPC controller. The neural network has $2$ hidden layers with $20$ neurons in each layer and its output layer bias term is modified after training to guarantee $\pi(0) = 0$. We plot the neural network controller in Fig.~\ref{fig:pendulum_and_controller} and set $\epsilon = 0.0158$ in~\eqref{eq:MIQP_f} since the closed-loop dynamics is linear and asymptotically stable inside $B_\epsilon$. 

For Lyapunov function candidates of order $k=0$ and $k=1$, we run Algorithm~\ref{alg:ACCPM} with ROI $\mathcal{X} = \gamma \mathcal{X}_0$ of varying values of $\gamma$. For the quadratic function class $V^{(0)}(x;P)$, the largest ROI is given by $\mathcal{X} = 0.86 \mathcal{X}_0$ through bisection with which Algorithm~\ref{alg:ACCPM} terminates in $16$ iterations with a total running time of $13.687$ seconds. The corresponding estimate of ROA is shown in Fig.~\ref{fig:IP_quad_ROA}. For the PWQ function class $V^{(1)}(x;P)$, the largest ROI is given by $\mathcal{X} = 1.0 \mathcal{X}_0$ in which case Algorithm~\ref{alg:ACCPM} terminates in $11$ iterations with a total running time of $90.917$ seconds. The estimate of ROA obtained by the found PWQ Lyapunov function candidate is shown in Fig.~\ref{fig:IP__ROA}. It shows that the synthesized Lyapunov function in $V^{(1)}(x;P)$ is less conservative compared with the one in $V^{(0)}(x;P)$ and Algorithm~\ref{alg:ACCPM} can obtain non-trivial estimates of ROA for the neural network controlled hybrid systems.

\begin{figure}
	\centering
	\begin{subfigure}{0.49\columnwidth}
		\centering
		\includegraphics[width = 0.99 \linewidth]{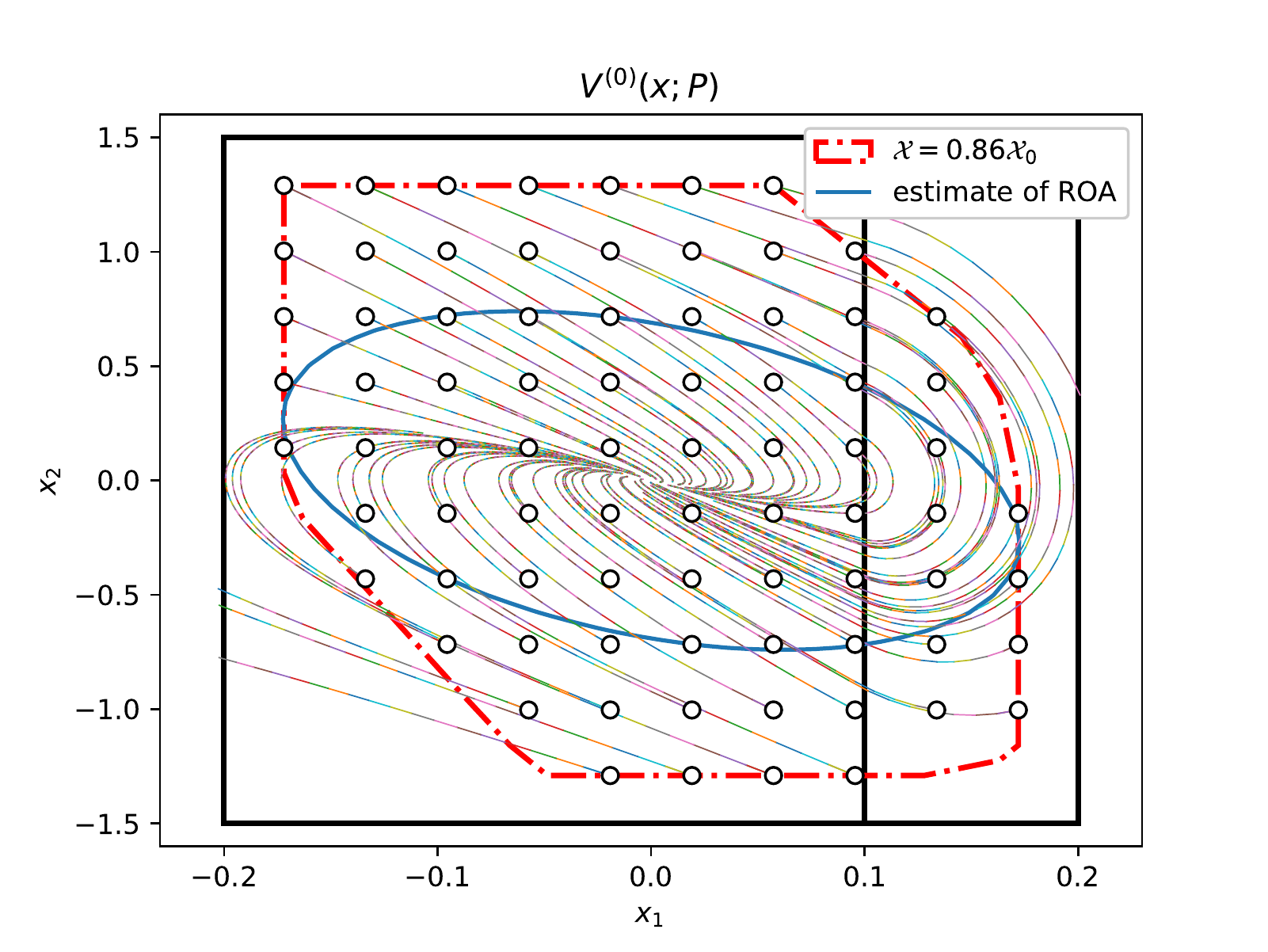}
		\caption{Estimate of ROA of the neural network controlled system by a quadratic Lyapunov function in $V^{(0)}(x;P)$. }
		\label{fig:IP_quad_ROA}
	\end{subfigure} 
	\begin{subfigure}{0.49\columnwidth}
		\centering
		\includegraphics[width = 0.99 \linewidth]{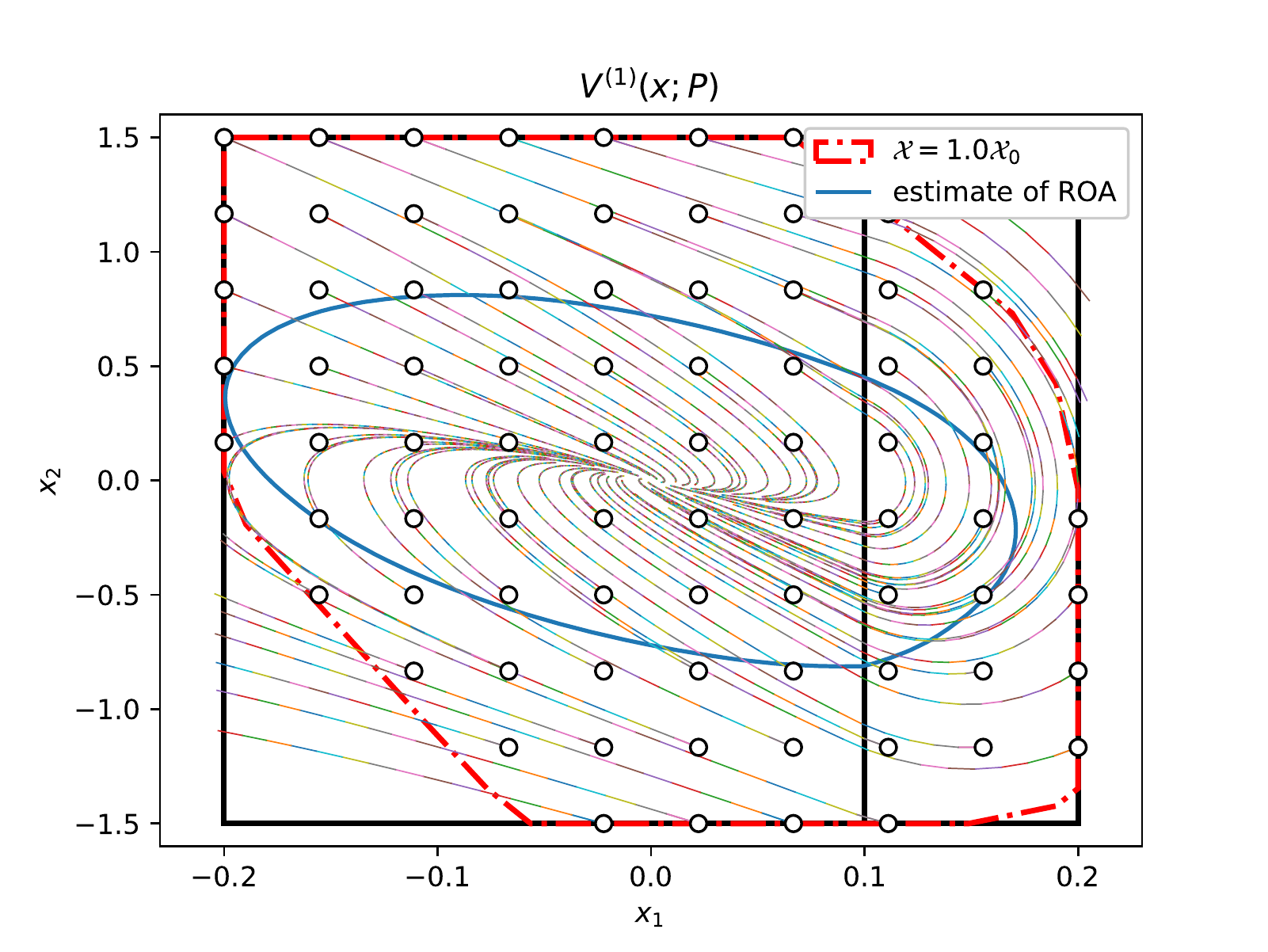}
		\caption{Estimate of ROA of the neural network controlled system by a PWQ Lyapunov function in $V^{(1)}(x;P)$. }
		\label{fig:IP__ROA}
	\end{subfigure}
	\caption{Estimates of ROA found by Algorithm~\ref{alg:ACCPM} with Lyapunov function candidates in $V^{(0)}(x;P)$ and $V^{(1)}(x;P)$. The partitions of the state space are marked by the black boxes. Simulated closed-loop trajectories with the neural network controller are plotted for a grid of initial conditions.}
	\label{fig:IP_ROA}
\end{figure}

\section{Conclusion}
We have proposed a learning-based method to learn Lyapunov functions for autonomous hybrid systems that have a mixed-integer formulation, including piecewise affine, linear complementarity, mixed logical dynamical systems and ReLU neural networks. By designing the method according to the analytic center cutting-plane method, we show that the proposed algorithm is guaranteed to find a Lyapunov function in a finite number of steps when the set of Lyapunov functions is full-dimensional in the parameter space. Our method is an alternative to the LMI-based Lyapunov function synthesis approach which relies on the piecewise affine representation of hybrid systems.


\small
\bibliographystyle{ieeetr}
\bibliography{acmart}

\end{document}